\newtheorem{theorem}{Theorem}[section]
\newtheorem{lemma}[theorem]{Lemma}
\newtheorem{corollary}[theorem]{Corollary}
\newtheorem{proposition}[theorem]{Proposition}
\newtheorem{theirtheorem}{Theorem}
\newcommand{\Z}{\mathbb Z}
\newcommand{\R}{\mathbb R}
\DeclareMathOperator{\ord}{ord}
\newcommand{\la}{\langle}
\newcommand{\ra}{\rangle}
\newcommand{\be}{\begin{equation}}
\newcommand{\ee}{\end{equation}}
\newcommand{\und}{\;\mbox{ and }\;}
\newcommand{\nn}{\nonumber}
\newcommand{\ber}{\begin{eqnarray}}
\newcommand{\eer}{\end{eqnarray}}
\newcommand{\Sum}[2]{\underset{#1}{\overset{#2}{\sum}}}
\newcommand{\Summ}[1]{\underset{#1}{\sum}}
\newcommand{\wtilde}{\widetilde}
\DeclareSymbolFont{goo}{OMS}{cmsy}{b}{n}
\DeclareMathSymbol{\gooT}{\mathalpha}{goo}{"1}
\begin{document}

\title{A Single Set Improvement to the  $3k-4$ Theorem}

\author{David J. Grynkiewicz}
\email{diambri@hotmail.com}
\address{Department of Mathematical Sciences\\ University of Memphis\\ Memphis, TN 38152, USA}
\subjclass[2010]{11P70, 11B75}
\keywords{$3k-4$ Theorem, Freiman's Theorem, Sumset, Inverse Additive, Arithmetic Progression}

\begin{abstract}
The $3k-4$ Theorem is a classical result which asserts that if $A,\,B\subseteq \mathbb Z$ are finite, nonempty subsets with
\begin{equation}\label{hyp}|A+B|=|A|+|B|+r\leq |A|+|B|+\min\{|A|,\,|B|\}-3-\delta,\end{equation} where $\delta=1$ if $A$ and $B$ are translates of each other, and otherwise $\delta=0$, then there are arithmetic progressions $P_A$ and $P_B$ of common difference such that $A\subseteq P_A$, $B\subseteq P_B$, $|B|\leq |P_B|+r+1$ and $|P_A|\leq |A|+r+1$. It is one of the few cases in Freiman's Theorem for which exact bounds on the sizes of the progressions are known. The hypothesis \eqref{hyp} is best possible in the sense that there are examples of sumsets $A+B$ having cardinality just one more than that of \eqref{hyp},  yet $A$ and $B$ cannot  \emph{both} be contained in short length arithmetic progressions. In this paper, we show that the hypothesis \eqref{hyp} can be significantly weakened and still yield the same conclusion for \emph{one} of the sets $A$ and $B$. Specifically, if $|B|\geq 3$, $s\geq 1$ is the unique integer with
$$(s-1)s\left(\frac{|B|}{2}-1\right)+s-1<|A|\leq s(s+1)\left(\frac{|B|}{2}-1\right)+s,$$
 and  \begin{equation}\label{hyp2}  |A+B|=|A|+|B|+r< (\frac{|A|}{s}+\frac{|B|}{2}-1)(s+1),\end{equation} then we show  there is an arithmetic progression $P_B\subseteq \mathbb Z$ with $B\subseteq P_B$ and $|P_B|\leq |B|+r+1$. The hypothesis \eqref{hyp2} is best possible (without additional assumptions on $A$) for obtaining such a conclusion.
 \end{abstract}

\maketitle

\section{Introduction}

For finite, nonempty subsets $A$ and $B$ of an abelian group $G$, we define their sumset to be $$A+B=\{a+b:\;a\in A,\,b\in B\}.$$ All intervals will be discrete, so $[x,y]=\{z\in\Z:\; x\leq z\leq y\}$ for real numbers $x,\,y\in \R$. More generally, for $d\in G$ and $x,\,y\in \Z$, we let $$[x,y]_d=\{xd,(x+1)d,\ldots,yd\}$$ denote the corresponding interval with difference $d$.
For a nonempty subset $X\subseteq \Z$, we let $\gcd(X)$ denote the greatest common divisor of all elements of $X$, and use the abbreviation $\gcd^*(X):=\gcd(X-X)$ to denote the affine (translation invariant) greatest common divisor of the set $X$, which is equal to $\gcd(-x+X)$ for any $x\in X$. Note $\gcd^*(X)=\gcd(X)$ when $0\in X$.

The study of the structure of $A$ and $B$ assuming $|A+B|$ is small in comparison to the cardinalities $|A|$ and $|B|$ is an important topic in Inverse Additive Number Theory. For instance, if $A=B\subseteq \Z$ with $|A+A|\leq C|A|$, where $C$ is a fixed constant, then Freiman's Theorem asserts that there is a multi-dimensional progression $P_A\subseteq \Z$ with $A\subseteq  P_A$ and $|P_A|\leq f(C)|A|$, where $f(C)$ is a constant that depends only on $C$. The reader is directed to the text \cite{Tao-vu-book} for a fuller discussion of this result, its generalizations, and its implications and importance.

In this paper, we are interested in the special case of Freiman's Theorem when $|A+B|$ is very small, with $C<3$. The following is the (Freiman) $3k-4$ Theorem, proved in the case $A=B$ by Freiman \cite{freiman-3k4} \cite{freiman-book}, extended (in various forms) to general summands $A\neq B$  by Freiman \cite{freiman-3k4-distinct}, by Lev and Smeliansky  \cite{lev-smel-3k4}, and by Stanchescu \cite{Stanchesuc-3k4},  with  the additional conclusion regarding a long length arithmetic progression added later by  Freiman \cite{freiman-3k4-longAP} (in the special case $A=B$)  and by Bardaji and Grynkiewicz \cite{itziar-3k4} (for general  $A\neq B$). The formulation given below is an equivalent simplification of that given in the text \cite[Theorem 7.1(i)]{Grynk-book}.

\begin{theirtheorem}[$3k-4$ Theorem]
Let $A,\,B\subseteq \Z$ be finite, nonempty subsets with
$$|A+B|=|A|+|B|+r\leq |A|+|B|+\min\{|A|,\,|B|\}-3-\delta,$$ where $\delta=1$ if $A$ and $B$ are translates of each other, and otherwise $\delta=0$.
Then there exist arithmetic progressions $P_A, P_B, P_C\subseteq \Z$, each with common difference $d=\gcd^*(A+B)$, such that $A\subseteq P_A$, $B\subseteq P_B$,  and $C\subseteq A+B$ with
$$|P_A|\leq |A|+r+1,\quad |P_B|\leq |B|+r+1\,\quad \und\quad |P_C|\geq |A|+|B|-1.$$
\end{theirtheorem}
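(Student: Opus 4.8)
The plan is to normalize the configuration, deduce the progression containments for $A$ and for $B$ separately as two sharp sumset lower bounds, and then locate the long progression inside $A+B$ by a hole count.

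\emph{Normalization and reduction.} Passing from $(A,B)$ to $\bigl(\tfrac1d(A-\min A),\ \tfrac1d(B-\min B)\bigr)$ with $d=\gcd^*(A+B)$ preserves $|A|$, $|B|$, $|A+B|$, $r$, and whether the two sets are translates; and progressions found for the normalized pair pull back through $x\mapsto dx+(\min A+\min B)$ to progressions of difference $d=\gcd^*(A+B)$ of the required sizes. So I may assume $\gcd^*(A+B)=1$ and $0=\min A=\min B$. Put $a=\max A$ and $b=\max B$, so $A\subseteq[0,a]$, $B\subseteq[0,b]$, $0,a+b\in A+B\subseteq[0,a+b]$, and, swapping $A$ and $B$ if need be, $\min\{|A|,|B|\}=|B|$; the cases $|B|\le2$ are disposed of directly (the hypothesis is then either vacuous or forces $A,B$ to be intervals), so assume $|B|\ge3$. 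It now suffices to prove (i) $a\le|A|+r$, (ii) $b\le|B|+r$, and (iii) $A+B$ contains an interval of length $\ge|A|+|B|-1$: then $P_A=[0,a]$, $P_B=[0,b]$ and that interval are as required.

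\emph{The diameter bounds.} Since $0=\min A$, (i) is equivalent to $|A+B|\ge\diam(A)+|B|$, and likewise (ii) to $|A+B|\ge\diam(B)+|A|$; these two inequalities are the real content. I would prove them by induction on $|A|+|B|$. Deleting the largest element of $A$ (or of $B$) drops $|A+B|$ by at least one while replacing $|A|+|B|$ by $|A|+|B|-1$, so the parameter $r$ does not grow; in non-extremal configurations a suitably chosen extreme element can be removed so that a (re-normalized) instance of the hypothesis is inherited, and the inductive bound plus restoring the element yields the claim. The substance lies in the \emph{extremal} configurations, where no such reduction is available: there $A$ and $B$ are rigidly structured, and one applies Dyson $e$-transforms $(A,B)\mapsto\bigl(A\cup(B+e),\ (A-e)\cap B\bigr)$ — which satisfy $A'+B'\subseteq A+B$ and $|A'|+|B'|=|A|+|B|$ — to drive toward the extreme case $|A+B|=|A|+|B|-1$. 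That equality classically forces $A$ and $B$ to be arithmetic progressions of common difference, hence intervals since $\gcd^*(A+B)=1$, for which (i) and (ii) hold with room to spare. The correction $\delta$ enters exactly in this extremal analysis: when $A$ and $B$ are translates the crude counting loses one, and must be replaced by a Vosper/Kneser-type rigidity statement — which is precisely what sharpening the hypothesis to $\le|A|+|B|+\min\{|A|,|B|\}-3-\delta$ encodes.

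\emph{The long progression in $A+B$.} With $A\subseteq[0,a]$, $B\subseteq[0,b]$ now in hand, let $h_A=a+1-|A|\le r+1$ and $h_B=b+1-|B|\le r+1$ be the numbers of integers omitted by $A$ and $B$ in their spans, and $h=(a+b+1)-(|A|+|B|+r)=h_A+h_B-(r+1)$ the number omitted by $A+B$ in $[0,a+b]$. An omitted integer $n$ of $A+B$ with $0\le n\le b$ is omitted by $B$ (otherwise $n=0+n\in A+B$) and, if $n\le a$, also by $A$; reflecting $A+B$ about $\tfrac{a+b}2$ gives the mirror statements near $a+b$. Fed into the counts $h_A,h_B\le r+1$, $h=h_A+h_B-(r+1)$, together with $0,a+b\in A+B$, these constraints force the omitted integers of $A+B$ to cluster toward one end of $[0,a+b]$, whence either the initial segment $[0,x]$ or the terminal segment $[y,a+b]$ it contains has length $\ge|A|+|B|-1$; that segment is the desired $P_C$.

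\emph{Main obstacle.} The difficulty is concentrated in the diameter bound with its \emph{sharp} constant: non-extremal configurations fall to routine deletion, but the extremal and translate cases — exactly where the bound is tight and where the $\delta$ lives — require genuine rigidity (Vosper-type when $A=B$, its analogue for distinct summands). Relatedly, forcing the progression in $A+B$ to have the exact length $|A|+|B|-1$, rather than merely $|A|+|B|-1-O(r)$, hinges on localizing the at most $r+1$ omitted integers of $A+B$ to a single end of $[0,a+b]$.
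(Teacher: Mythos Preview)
The paper does not contain a proof of this statement. The $3k-4$ Theorem is quoted as background (labelled Theorem~A, with citations to Freiman, Lev--Smeliansky, Stanchescu, and Bardaji--Grynkiewicz, and to \cite[Theorem~7.1(i)]{Grynk-book}); the paper's own contribution begins only with Theorem~\ref{thm-3k-4-minimprove}. So there is no ``paper's proof'' to compare your proposal against.

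On its own terms, your outline has genuine gaps at exactly the two places you flag as ``the main obstacle.'' For the diameter bounds, your deletion scheme does not close: removing $\max A=a$ gives, by induction, $|A'+B|\ge a'+|B|$ with $a'=\max A'$, but recovering $|A+B|\ge a+|B|$ requires $|A+B|-|A'+B|\ge a-a'$, which is simply false in general when $a-a'>1$. You defer this to ``extremal configurations'' handled by Dyson transforms, but that is where the entire content is, and nothing is argued. The standard route (Lev--Smeliansky, and the paper's own Corollary~\ref{cor-modred}) is modular reduction modulo $n=\max B$ combined with Kneser/Vosper in $\Z/n\Z$; your sketch does not reproduce this.

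For the long progression $P_C$, the claim that the holes of $A+B$ in $[0,a+b]$ must ``cluster toward one end'' does not follow from the counts $h_A,h_B\le r+1$ and $h=h_A+h_B-(r+1)$ alone. What those counts (plus the observations that a hole of $A+B$ in $[0,b]$ is a hole of $B$, and symmetrically) buy you is control on holes in the two end-segments $[0,b]$ and $[a,a+b]$, but they say nothing about holes in the middle range $(b,a)$ when $a>b$, and do not by themselves force a gap-free block of length $|A|+|B|-1$. The actual argument for $P_C$ (Freiman for $A=B$, Bardaji--Grynkiewicz in general) is more delicate and uses the structure already obtained for $A$ and $B$; a raw hole count is not enough.
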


The bounds $|P_A|\leq |A|+r+1$,  $|P_B|\leq |B|+r+1$ and $|P_C|\geq |A|+|B|-1$ are best possible, as seen by the example $A=[0,r]_2\cup [2r+2,|A|+r]$ and $B=[0,r]_2\cup [2r+2,|B|+r]$, which has $A+B=[0,r]_2\cup [2r+2,|A|+|B|+2r]$ for $-1\leq r\leq \min\{|A|,\,|B|\}-3$, showing that all three bounds can hold with equality simultaneously. The bound $|A+A|\leq 3|A|-4$ is tight, as seen by the example $A=[0,|A|-2]\cup \{N\}$ for $N$ large, which shows $|P_A|$ cannot be bounded when $|A+A|\geq 3|A|-3$. Likewise, when $A$ and $B$ are not translates of each other, then the bound $|A+B|\leq |A|+|B|+\min\{|A|,\,|B|\}-3$ is also tight, as seen by the example $B=[0,|B|-1]$ and $A=[0,|A|-2]\cup \{N\}$ for $N$ large and $|A|\geq |B|$.

When $|B|$ is significantly smaller than $|A|$, the hypothesis $|A+B|\leq |A|+2|B|-3$ is rather strong, making effective use of the $3k-4$ Theorem more restricted.
There has only been limited success in obtaining conclusions similar to the $3k-4$ Theorem above the threshold  $|A|+|B|+\min\{|A|,\,|B|\}-3-\delta$. See for instance \cite{huicochea-3k4}, where a weaker bound on $|P_B|$ is obtained under an alternative hypothesis (discussed in the concluding remarks) than our hypothesis \eqref{hyp-bounds}. For versions involving more than two summands, see \cite{huicochea-levconj} \cite{lev-mult-3k4-adem} \cite{lev-mult-3k4}. Some related results may also be found in \cite{chen-3k4} \cite{jin} \cite{lev-long} \cite{Ruzsa-3k4}.

As the previous examples  show, if one wishes to consider sumsets with cardinality above the threshold $|A|+|B|+\min\{|A|,\,|B|\}-3-\delta$, then $A$ and $B$ cannot \emph{both} be contained in short arithmetic progressions.
The goal of this paper is to show that, nonetheless, at least \emph{one} of the sets $A$ and $B$ can, indeed, be contained in a short  arithmetic progression under a much weaker hypothesis than that of the $3k-4$ Theorem. Specifically, our main result is the following theorem, whose bounds are optimal in the sense described afterwards.

\begin{theorem}\label{thm-3k-4-minimprove}
Let $A,\,B\subseteq \Z$ be finite, nonempty subsets with $|B|\geq 3$ and  let $s\geq 1$ be the unique integer with
\be\label{hyp-bounds}(s-1)s\left(\frac{|B|}{2}-1\right)+s-1<|A|\leq s(s+1)\left(\frac{|B|}{2}-1\right)+s.\ee
 Suppose \be\label{hyp1a} |A+B|=|A|+|B|+r< (\frac{|A|}{s}+\frac{|B|}{2}-1)(s+1).\ee
Then there exists an arithmetic progression $P_B\subseteq \Z$ such that $B\subseteq P_B$ and $|P_B|\leq |B|+r+1$.
\end{theorem}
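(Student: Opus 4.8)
The plan is to induct on $|A|$, with the case $s=1$ serving as the base. When $s=1$, hypothesis \eqref{hyp-bounds} forces $1\le |A|\le |B|-1$, so $|A|<|B|$; in particular $A$ and $B$ are not translates of each other, whence $\delta=0$ and $\min\{|A|,|B|\}=|A|$, and \eqref{hyp1a} then reads $|A+B|=|A|+|B|+r<2|A|+|B|-2$, i.e.\ $|A+B|\le |A|+|B|+\min\{|A|,|B|\}-3$. This is exactly the hypothesis of the $3k-4$ Theorem, which supplies an arithmetic progression $P_B$ with $B\subseteq P_B$ and $|P_B|\le |B|+r+1$, as required. So from now on assume $s\ge 2$.

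Two normalizations come first. Translating, assume $0=\min A=\min B$. A dilation/$\gcd$ reduction then lets us assume $\gcd^*(B)=1$: writing $d_B=\gcd^*(B)=\gcd(B)$, decompose $A=\bigsqcup_\rho A_\rho$ along residue classes modulo $d_B$, so that $A+B=\bigsqcup_\rho (A_\rho+B)$ is a disjoint union; pass to the class $\rho^*$ with $|A_{\rho^*}|$ maximal, and verify that $\big((A_{\rho^*}-\rho^*)/d_B,\ B/d_B\big)$ again satisfies \eqref{hyp-bounds}--\eqref{hyp1a} for some $s'\le s$, using $|A_{\rho^*}|\ge |A|/d_B$ together with $|A_\rho+B|\ge |A_\rho|+|B|-1$ for the discarded classes to control the new $r$; a progression for $B/d_B$ pulls back to one of difference $d_B$ for $B$ of the same length. (This reduction should be routine but does need care.) After this, $B\subseteq[0,L]$ with $\gcd(B)=1$ and $L:=\max B$, and the conclusion we must establish is precisely $L\le |B|+r=|A+B|-|A|$, i.e.\ $|A+B|\ge |A|+L$.

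For the inductive step I would pass from $A$ to a proper subset $A'\subsetneq A$ and invoke the inductive hypothesis on $(A',B)$. Any $A'$ suffices provided (i) $(A',B)$ satisfies \eqref{hyp-bounds}--\eqref{hyp1a} with its associated parameter $s'\le s$, and (ii) $r':=|A'+B|-|A'|-|B|\le r$, so that the progression for $B$ produced by induction is short enough. Condition (ii) is automatic when $A'$ is obtained by deleting a terminal segment of $A$: deleting the $\Delta$ largest elements of $A$ removes at least the $\Delta$ distinct sums $a+L$ with $a$ deleted (each exceeds $\max(A')+L=\max(A'+B)$), so $|A+B|-|A'+B|\ge \Delta=|A|-|A'|$. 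The real work is condition (i). Here the difficulty is that the right side $\big(\tfrac{|A|}{s}+\tfrac{|B|}{2}-1\big)(s+1)$ of \eqref{hyp1a}, viewed as a function of $|A|$ with $s=s(|A|)$ held fixed, increases with slope $\tfrac{s+1}{s}>1$, so deleting $\Delta$ elements while gaining only the $\Delta$ guaranteed new sums overshoots the budget unless $|A|$ sits at the very bottom of its $s$-range (where a single deletion drops $s$ to $s-1$ and the induction closes). One is therefore driven into a dichotomy: either some terminal or initial segment of $A$ of size $\Delta$ contributes at least $\Delta+\lceil\Delta/s\rceil$ new sums to $A+B$ — in which case that deletion strictly decreases $r$ while keeping the parameter $\le s$, and we may iterate (this terminates since $r\ge -1$ and only drops), eventually reaching the bottom of a range — or else both long terminal and long initial segments of $A$ are ``rigid'', each segment of size below $s$ contributing exactly as many new sums as its size.

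The rigid alternative is the main obstacle. There one must extract genuine structure on $A$: such efficiency at both ends forces $A$ to be (close to) a union of at most $s$ arithmetic progressions with a common difference, which should follow by applying $3k-4$-type structural results to well-chosen subsums $A_0+B$ with $|A_0|$ near $|B|$, together with Kneser's theorem; and with that structure in hand one either relocates the cut so that (i) can be met after all, or counts $|A+B|$ directly, playing \eqref{hyp1a} against the structure to conclude $L\le |B|+r$ outright (equivalently, ruling out $|A+B|\le |A|+L-1$). Carrying out this rigid case — in particular pinning down the diameter $L$ of $B$ — is where the bulk of the technical effort will lie, and is presumably where the full machinery developed around the $3k-4$ Theorem is needed.
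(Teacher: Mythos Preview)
Your proposal has a genuine gap: the ``rigid'' alternative is where all the content lies, and you do not resolve it. You correctly observe that deleting a terminal segment of size $\Delta$ removes at least $\Delta$ sums but that the threshold in \eqref{hyp1a} drops by $\Delta\cdot\tfrac{s+1}{s}$, so the naive deletion fails; but the claim that rigidity at both ends ``forces $A$ to be (close to) a union of at most $s$ arithmetic progressions'' is unsupported. Applying $3k-4$ to subsums $A_0+B$ with $|A_0|$ near $|B|$ would at best give local structure on $|B|$-sized pieces of $A$, with no mechanism to glue these into global structure on $A$ or to bound $L=\max B$. As written, the inductive step never closes. (A smaller issue: in your $\gcd^*(B)=1$ reduction, picking the single class $A_{\rho^*}$ of maximal size does not obviously yield \eqref{hyp1a} for that class; the correct argument is that if \emph{every} class fails \eqref{hyp1a} then summing the resulting lower bounds, together with the optimization Lemma \ref{lemma-2D-redcalc}, contradicts the original hypothesis.)

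The paper's proof proceeds quite differently. After reducing to $\gcd^*(B)=1$, one inducts on $|A|$ but the engine is modular reduction modulo $n=\max B$: with $A_0=\phi_n(A)$ and $B_0=\phi_n(B)$, Corollary \ref{cor-modred} gives $|A+B|\ge |A_0+B_0|+|A|$, so the goal $n\le |B|+r$ reduces to controlling $|A_0+B_0|$ in $\Z/n\Z$. If $A_0+B_0=\Z/n\Z$ one is done; if $|A_0+B_0|<|A_0|+|B_0|-1$, Kneser's Theorem and the Kemperman Structure Theorem classify the situation, and the type (II) (arithmetic progression) subcase is handled by realizing $A+B$ as a two-dimensional sumset and invoking a discrete Brunn--Minkowski bound (Theorem \ref{2D-brunn-Mink}) together with the optimization Lemma \ref{lemma-2D-redcalc}. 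The remaining case $|A_0+B_0|\ge |A_0|+|B_0|-1$ is where induction is used, but the set $A'$ is formed by deleting the \emph{minimum element of each residue class of $A$ modulo $n$}, not a terminal segment; this gives $|A'|=|A|-|A_0|$ and, crucially, $|A'+B|\le |A+B|-(|A_0|+|B|-2)$, a drop large enough (after bounding $|A_0|$ via the case hypothesis) to close the induction across three subcases depending on the value of $s'$ relative to $s$. The choice of what to delete is thus dictated by the modular structure of $B$, not by the linear order on $A$.
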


The hypothesis \eqref{hyp-bounds} depends on the relative size of $|A|$ and $|B|$. This dependence is necessary, and essentially best possible, as seen by the example $B=[0,\frac{|B|}{2}-1]\cup (N+[0,\frac{|B|}{2}-1])$ and $A=[0,\frac{|A|}{s}-1]\cup (N+[0,\frac{|A|}{s}-1])\cup (2N+[0,\frac{|A|}{s}-1])\cup \ldots\cup ((s-1)N+[0,\frac{|A|}{s}-1])$  for $|B|$ even with $s\mid |A|$ and $N$ large. It is then a minimization problem (carried out in Lemma \ref{lemma-2D-redcalc}) that the optimal choice of $s$ depends on the relative size of $|A|$ and $|B|$ as described in  \eqref{hyp-bounds}.  The bound $|P_B|\leq |B|+r+1$ is also best possible, as seen by the example $B=[0,|B|-2]\cup \{|B|+r\}$ and $A=[0,|A|-1]$.
As a weaker consequence of Theorem \ref{thm-3k-4-minimprove}, we  derive the following corollary, which eliminates  the parameter $s$.

\begin{corollary}\label{cor-3k-4-minimprove}
Let $A,\,B\subseteq \Z$ be finite, nonempty subsets.
 Suppose \be\nn |A+B|=|A|+|B|+r<|A|+\frac{|B|}{2}-1+2\sqrt{|A|(\frac{|B|}{2}-1)}.\ee
Then there exists an arithmetic progression $P_B\subseteq \Z$ such that $B\subseteq P_B$ and $|P_B|\leq |B|+r+1$.
\end{corollary}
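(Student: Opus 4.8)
The plan is to derive Corollary \ref{cor-3k-4-minimprove} directly from Theorem \ref{thm-3k-4-minimprove} by recognizing that the clean hypothesis stated in the corollary is nothing but the infimum, taken over all real $s>0$, of the right-hand side of \eqref{hyp1a}; consequently that hypothesis implies \eqref{hyp1a} for the one integer $s$ isolated by \eqref{hyp-bounds}. Before invoking the theorem, I would dispose of the small cases $|B|\le 2$. If $|B|=1$, then $B$ is an arithmetic progression of length $1$ and $r=|A+B|-|A|-1=-1$, so $|P_B|=1=|B|+r+1$ and nothing is to be proved; if $|B|=2$, then $\frac{|B|}{2}-1=0$ and the right-hand side of the corollary's hypothesis is $|A|-1<|A|+1\le|A+B|$, so the hypothesis cannot hold and the statement is vacuous. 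Thus we may assume $|B|\ge 3$, in which case $\frac{|B|}{2}-1>0$ and, since the half-open intervals in \eqref{hyp-bounds} for $s=1,2,3,\dots$ tile $(0,\infty)$ (the right endpoint $s(s+1)(\frac{|B|}{2}-1)+s$ of the $s$-th equalling the left endpoint of the $(s+1)$-st), the integer $s\ge1$ of Theorem \ref{thm-3k-4-minimprove} is well-defined.

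The heart of the argument is the elementary estimate that for every real $s>0$,
\begin{align*}
\Big(\frac{|A|}{s}+\frac{|B|}{2}-1\Big)(s+1)
&= |A|+\frac{|B|}{2}-1+\frac{|A|}{s}+\Big(\frac{|B|}{2}-1\Big)s\\
&\ge |A|+\frac{|B|}{2}-1+2\sqrt{|A|(\tfrac{|B|}{2}-1)},
\end{align*}
the inequality being AM--GM applied to the positive numbers $\frac{|A|}{s}$ and $(\frac{|B|}{2}-1)s$, whose product $|A|(\frac{|B|}{2}-1)$ does not involve $s$. In particular this lower bound holds for the integer $s$ furnished by \eqref{hyp-bounds}. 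Hence if $|A+B|=|A|+|B|+r$ is strictly smaller than $|A|+\frac{|B|}{2}-1+2\sqrt{|A|(\frac{|B|}{2}-1)}$, as the corollary assumes, then $|A+B|<(\frac{|A|}{s}+\frac{|B|}{2}-1)(s+1)$ for that $s$, which is exactly hypothesis \eqref{hyp1a}. Theorem \ref{thm-3k-4-minimprove} then yields an arithmetic progression $P_B$ with $B\subseteq P_B$ and $|P_B|\le|B|+r+1$, completing the proof.

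Because the whole reduction rests on a single AM--GM inequality, I do not anticipate a genuine obstacle here; the only points deserving a careful line are the two verifications already flagged — that the intervals in \eqref{hyp-bounds} partition $(0,\infty)$ so that $s$ is well-defined and unique, and that the corollary's threshold really is $\inf_{s>0}(\frac{|A|}{s}+\frac{|B|}{2}-1)(s+1)$, so that passing from the $s$-dependent statement of Theorem \ref{thm-3k-4-minimprove} to the $s$-free statement of the corollary loses nothing. (The infimum is a minimum over the reals, attained at $s=\sqrt{|A|/(\frac{|B|}{2}-1)}$, which need not be an integer — but that is irrelevant, since the bound above holds at every real $s$, hence at the integer we actually use.)
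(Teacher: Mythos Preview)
Your proposal is correct and follows essentially the same route as the paper: dispose of $|B|\le 2$ separately, then for $|B|\ge 3$ use the AM--GM inequality $(\tfrac{|A|}{s}+\tfrac{|B|}{2}-1)(s+1)\ge |A|+\tfrac{|B|}{2}-1+2\sqrt{|A|(\tfrac{|B|}{2}-1)}$ (which is precisely the content of Proposition~\ref{prop-rough-estimate}) to reduce to Theorem~\ref{thm-3k-4-minimprove}. One tiny arithmetic slip: when $|B|=2$ the right-hand side of the hypothesis is $|A|$, not $|A|-1$; your vacuity argument still goes through since $|A+B|\ge |A|+1>|A|$, though the paper instead simply observes that any $B$ with $|B|\le 2$ is already an arithmetic progression with $|P_B|=|B|\le |B|+r+1$.
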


\section{Preliminaries}

For an abelian group $G$ and nonempty subset $X\subseteq G$, we let $$\mathsf H(X)=\{g\in G:\; g+X=X\}\leq G$$ denote the stabilizer of $X$, which is the largest subgroup $H$ such that $X$ is a union of $H$-cosets. The set $X$ is called \emph{aperiodic} if $\mathsf H(X)$ is trivial, and \emph{periodic} if $\mathsf H$ is nontrivial. More specifically, we say $X$ is $H$-periodic if $H\leq \mathsf H(X)$, equivalently, if $X$ is a union of $H$-cosets. For a subgroup $H\leq G$, we let $$\phi_H:G\rightarrow G/H$$ denote the natural homomorphism. We let $\la X\ra$ denote the subgroup generated by $X$, and let $\la X\ra_*=\la X-X\ra$ denote the affine (translation invariant) subgroup generated by $X$, which is the minimal subgroup $H$ such that $X$ is contained in an $H$-coset. Note $\la X\ra_*=\la -x+X\ra$ for any $x\in X$. In particular, $\la X\ra_*=\la X\ra$ when $0\in X$. If $k\in\Z$, then $k\cdot A=\{kx:\;x\in A\}$ denotes the $k$-dilate of $A$.

Kneser's Theorem \cite[Theorem 6.1]{Grynk-book} \cite[Theorem 5.5]{Tao-vu-book} is a core result in inverse additive theory.

\begin{theirtheorem}[Kneser's Theorem]\label{thm-kt}
Let $G$ be an abelian group, let $A,\,B\subseteq G$ be finite, nonempty subsets, and let $H=\mathsf H(A+B)$. Then $$|A+B|\geq |A+H|+|B+H|-|H|=|A|+|B|-|H|+\rho,$$ where $\rho=|(A+H)\setminus A|+|(B+H)\setminus H|\geq 0$.
\end{theirtheorem}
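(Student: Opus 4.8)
The plan is to prove the inequality in the equivalent form $|A+B|\ge |A+H|+|B+H|-|H|$; the displayed identity with $\rho$ then follows immediately, since $|A+H|=|A|+|(A+H)\setminus A|$, $|B+H|=|B|+|(B+H)\setminus B|$, and $\rho\ge 0$. Two elementary facts will be used throughout: $H=\mathsf H(A+B)$ is finite (for any fixed $x\in A+B$ one has $H\subseteq (A+B)-x$), and $A+B$, being a union of $H$-cosets, satisfies $(A+H)+(B+H)=A+B$. Since translating $A$ or $B$ individually affects none of $H$, $|A+B|$, $|A+H|$, $|B+H|$, we may assume $0\in A$ and $0\in B$. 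I would argue by induction on the pair $(|A+B|,\,|B|)$ ordered lexicographically, so that the induction hypothesis is available for every pair (in any abelian group) whose sumset is smaller than $|A+B|$, and also for every pair with sumset of size $|A+B|$ and strictly smaller second summand.

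Begin with the reductions. If $|B|=1$ or $|A|=1$, both sides of the claimed inequality equal $\max\{|A|,|B|\}$, so there is nothing to prove. If $B\subseteq H$ — equivalently, since $0\in B$, if $B$ lies in a single $H$-coset — then $A+B\subseteq A+H$, while $A+B=A+B+H\supseteq A+H$; hence $A+B=A+H$, $|B+H|=|H|$, and the inequality holds with equality; the case $A\subseteq H$ is symmetric. The key reduction removes periodicity: if $|H|>1$, pass to $\overline G=G/H$ and put $\overline A=\phi_H(A)$, $\overline B=\phi_H(B)$. Then $\overline A+\overline B=\phi_H(A+B)$ has trivial stabilizer in $\overline G$, and $|\overline A+\overline B|=|A+B|/|H|<|A+B|$, so the induction hypothesis gives $|\overline A+\overline B|\ge|\overline A|+|\overline B|-1$; multiplying by $|H|$ and using $|\overline A|=|A+H|/|H|$, $|\overline B|=|B+H|/|H|$ yields precisely $|A+B|\ge|A+H|+|B+H|-|H|$. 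It therefore remains to treat the case in which $A+B$ is aperiodic ($H=\{0\}$) and $|A|,|B|\ge 2$, where we must show $|A+B|\ge|A|+|B|-1$.

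Here the engine is the Dyson (``$e$-'') transform: for $e\in G$ set $A(e)=A\cup(B+e)$ and $B(e)=B\cap(A-e)$. One checks directly that $A(e)+B(e)\subseteq A+B$, that $|A(e)|+|B(e)|=|A|+|B|$ (inclusion--exclusion via $B(e)=-e+(A\cap(B+e))$), that $B(e)\ne\emptyset$ exactly when $e\in A-B$, and that $A\subseteq A(e)$ and $B(e)\subseteq B$. Since $\mathsf H(A)\le\mathsf H(A+B)$, which is trivial here, and $|B|\ge 2$, the set $B$ is not contained in any coset of $\mathsf H(A)$, so the translates $A-b$ ($b\in B$) are not all equal, and one can choose $e\in A-B$ with $\emptyset\ne B(e)\subsetneq B$. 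If some such $e$ also preserves the sumset, i.e. $A(e)+B(e)=A+B$, then $(A(e),B(e))$ has the same (aperiodic) sumset but strictly smaller second summand, so the induction hypothesis gives $|A+B|=|A(e)+B(e)|\ge|A(e)|+|B(e)|-1=|A|+|B|-1$, completing the argument.

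The delicate case is when $B$ can be strictly shrunk by a Dyson transform but \emph{every} such transform strictly shrinks the sumset as well. Then the induction hypothesis applied to $(A(e),B(e))$ (whose sumset is smaller) gives only $|A+B|>|A(e)+B(e)|\ge|A(e)|+|B(e)|-|\mathsf H(A(e)+B(e))|=|A|+|B|-|\mathsf H(A(e)+B(e))|$, which is too weak if the transformed sumset acquires a large stabilizer; excluding such configurations is exactly the content of Kneser's theorem beyond Cauchy--Davenport-type bounds. The classical device (see the treatments in \cite{Grynk-book} and \cite{Tao-vu-book}) is to iterate the transform — each step either strictly decreasing $|A+B|$ or strictly decreasing $|B|$ with the sumset unchanged, so the process terminates — and to analyze an \emph{extremal} resulting pair $(C,D)$, chosen with $|D|$ as small as possible among all pairs $(C,D)$ with $C+D\subseteq A+B$, $|C|+|D|\ge|A|+|B|$, and $C,D$ finite nonempty. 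Testing a further transform against the minimality of $|D|$ forces $D$ into a single coset of $\mathsf H(C+D)$, hence $C+D$ into a translate of $C$; chasing this through $\phi_{\mathsf H(C+D)}$ together with $C+D\subseteq A+B$ and the aperiodicity of $A+B$ forces the pair to degenerate (ultimately $|D|=1$), whence $|A+B|\ge|C+D|=|C|=|A|+|B|-1$. I expect this last step — controlling how the stabilizer of the sumset evolves under the transform, and thereby pinning down the structure of an extremal pair — to be the main obstacle; everything else is routine bookkeeping.
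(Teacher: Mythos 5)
The paper does not prove this statement: Kneser's Theorem is quoted as a classical result with citations to the literature, so there is no internal proof to compare against, and your attempt must be judged on its own. Your reductions are all correct and standard: finiteness of $H$, the equivalence of the two displayed forms of the bound, the base cases, the passage to $G/H$ (where $\phi_H(A+B)$ is indeed aperiodic and the lexicographic induction applies since $|A+B|/|H|<|A+B|$), the basic properties of the Dyson transform, the existence of $e\in A-B$ with $\emptyset\neq B(e)\subsetneq B$ when $\mathsf H(A)$ is trivial and $|B|\geq 2$, and the case where some such $e$ preserves the sumset.

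However, the remaining case --- every $B$-shrinking transform also strictly shrinks the sumset --- is precisely where the content of Kneser's Theorem lies, and your treatment of it is not a proof. The extremal-pair argument does correctly show that minimality of $|D|$ forces $D(e)=D$ for all $e\in C-D$, hence $D-D\subseteq \mathsf H(C)$, hence $C+D=C+d_0$ and $|C+D|=|C|\geq |A|+|B|-|D|$. But at that point $\mathsf H(C)$ may be a nontrivial finite subgroup with $|D|$ as large as $|\mathsf H(C)|$, and the fact that $C+D$ sits inside the aperiodic set $A+B$ does not prevent $C+D$ itself from being periodic; nothing in what you have written ``forces the pair to degenerate'' to $|D|=1$. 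Closing this requires the genuinely hard step of Kneser's proof (a secondary induction controlling how $\mathsf H$ of the sumset evolves, or an equivalent device as in the cited texts), which you explicitly defer as ``the main obstacle.'' As written, the argument establishes only the routine reductions together with the Cauchy--Davenport-type special cases, not the theorem itself. (Incidentally, the paper's statement contains a typo --- $\rho$ should involve $|(B+H)\setminus B|$, not $|(B+H)\setminus H|$ --- which your reformulation silently and correctly repairs.)
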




A very special case of Kneser's Theorem is the following basic bound for integer sumsets.

\begin{theirtheorem}\label{CDT-for-Z}
Let $A,\,B\subseteq \Z$ be finite, nonempty subsets. Then $|A+B|\geq |A|+|B|-1$.
\end{theirtheorem}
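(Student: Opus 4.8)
\textbf{Proof proposal for Theorem~\ref{CDT-for-Z}.}

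The plan is to prove the bound $|A+B|\geq |A|+|B|-1$ directly by induction on $|B|$, using nothing more than the linear order on $\Z$. First I would dispose of the base case $|B|=1$: here $A+B=A+\{b\}$ is a translate of $A$, so $|A+B|=|A|=|A|+|B|-1$, with equality. For the inductive step, suppose $|B|\geq 2$ and that the inequality holds for all pairs of finite nonempty subsets of $\Z$ whose second summand has fewer than $|B|$ elements. Normalize by translation so that $\min A=0$ and $\min B=0$, and let $b^*=\max B$ and $a^*=\max A$.

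The key maneuver is to split off the top element of $B$. Write $B=B'\cup\{b^*\}$, where $B'=B\setminus\{b^*\}$ is nonempty with $|B'|=|B|-1$. Then $A+B=(A+B')\cup(A+b^*)$, so the main step is to control how much of $A+b^*$ already lies inside $A+B'$, i.e. to show $|(A+b^*)\setminus(A+B')|\geq 1$. Every element of $A+B'$ is at most $a^*+\max B'<a^*+b^*$, since $\max B'<b^*$. On the other hand $a^*+b^*\in A+b^*$. Hence $a^*+b^*\in(A+b^*)\setminus(A+B')$, and this intersection is empty of nothing relevant — more precisely, at least this one element of $A+b^*$ is new. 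Therefore
\[
|A+B|\geq |A+B'|+1\geq \bigl(|A|+|B'|-1\bigr)+1=|A|+|B|-1,
\]
where the middle inequality is the induction hypothesis applied to $A$ and $B'$. This completes the induction and the proof.

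I do not expect any genuine obstacle here; the result is elementary. The one point that needs a word of care is simply ensuring $B'$ is nonempty so the induction hypothesis applies, which is guaranteed by the assumption $|B|\geq 2$ in the inductive step, and checking that the single ``new'' element $a^*+b^*$ genuinely fails to lie in $A+B'$, which follows from the strict inequality $\max B'<b^*$ together with $a^*=\max A$. One could alternatively phrase this as the standard fact that if $A$ and $B$ are finite subsets of a totally ordered abelian group (or simply of $\Z$) then the largest element of $A+B$ has a unique representation as $a+b$ with $a\in A$, $b\in B$; peeling off that element and inducting gives the bound. Either formulation yields the statement with equality precisely when $A$ and $B$ are arithmetic progressions of a common difference, though we do not need the equality characterization here.
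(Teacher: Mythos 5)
Your induction is correct: peeling off the maximal element $b^*$ of $B$ and observing that $a^*+b^*=\max(A+B)$ cannot lie in $A+B'$ because every element there is at most $a^*+\max B'<a^*+b^*$ gives exactly the needed gain of one element per step, and the base case $|B|=1$ is trivial. However, the paper does not prove this statement this way --- in fact it gives no standalone proof at all, but simply records it as ``a very special case of Kneser's Theorem'': since $A+B\subseteq\Z$ is finite and nonempty, its stabilizer $H=\mathsf H(A+B)$ is a finite subgroup of $\Z$ and hence trivial, so Kneser's bound $|A+B|\geq |A|+|B|-|H|$ collapses to $|A|+|B|-1$. Your route is more elementary and self-contained, using only the total order on $\Z$ (it is equivalent to exhibiting the chain $a_1+b_1<\cdots<a_1+b_n<a_2+b_n<\cdots<a_m+b_n$ of $|A|+|B|-1$ distinct sums); the paper's route costs nothing extra because Kneser's Theorem is already needed elsewhere in the argument. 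One small caution about your closing aside: equality in $|A+B|=|A|+|B|-1$ does not force both sets to be nontrivial arithmetic progressions of a common difference --- it also occurs whenever one of the sets is a singleton --- but since you explicitly do not use the equality characterization, this does not affect the proof.
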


If $|A+B|\leq |A|+|B|-1$, then $|\phi_H(A)+\phi_H(B)|=|\phi_H(A)|+|\phi_H(B)|-1$ follows from Kneser's Theorem, where $H=\mathsf H(A+B)$, reducing the description of sumsets with $|A+B|\leq |A|+|B|-1$ to the case when $A+B$ is aperiodic with $|A+B|=|A|+|B|-1$.  The complete description is then addressed by the Kemperman Structure Theorem. We summarize the relevant details here, which may be found in
 \cite[Chapter 9]{Grynk-book} and are summarized in more general form  in \cite{ittI}

Let $A,\,B\subseteq G$ and $H\leq G$. A nonempty subset of the form $(\alpha+H)\cap A$ is called an \emph{$H$-coset slice} of $A$. If $A_\emptyset\subseteq A$  is a nonempty subset of an $H$-coset and $A\setminus A_\emptyset$ is $H$-periodic, then $A_\emptyset$ is an $H$-coset slice and we say that $A_\emptyset$ \emph{induces an  $H$-quasi-periodic decomposition} of $A$, namely,
 $A=(A\setminus A_\emptyset)\cup A_\emptyset$.
If, in addition,   $B_\emptyset
\subseteq B$  induces an  $H$-quasi-periodic decomposition, and $\phi_H(A_\emptyset)+\phi_H(B_\emptyset)$ is a unique expression element in $\phi_H(A)+\phi_H(B)$, then $A_\emptyset+B_\emptyset\subseteq A+B$ also induces an $H$-quasi-periodic decomposition.

Let $X,\,Y\subseteq G$ be finite and nonempty subsets with $K=\la X+Y\ra_*$.  We say that the pair $(X,Y)$ is \emph{elementary of  type} (I), (II), (III) or (IV) if there are $z_A,\,z_B\in G$ such that $X=z_A+A$ and $Y=z_B+B$ for a pair of subsets $A,\,B\subseteq K$ satisfying the corresponding requirement below:
\begin{itemize}
\item[(I)]  $|A|=1$ or $|B|=1$.
\item[(II)] $A$ and $B$ are arithmetic progressions of common difference $d\in K$ with $|A|,\,|B|\geq 2$ and $\ord(d)\geq |A|+|B|-1\geq 3$.
\item[(III)] $|A|+|B|=|K|+1$ and there is precisely one unique expression element in the sumset $A+B$; in particular,   $A+B=K$, \, $|A|,\,|B|\geq 3$, and $|K|\geq 5$.
    \item[(IV)] $B=-(K\setminus A)$ and the sumset $A+B$ is aperiodic and contains no unique expression elements; in particular,   $A+B=A-(K\setminus A)=K\setminus \{0\}$, \ $|A|,\,|B|\geq 3$, and $|K|\geq 7$.
\end{itemize}

We will need the following result regarding type (III) elementary pairs.

\begin{lemma}
\label{thm-typeIII} Let $G$ be an abelian group and let $A,\,B\subseteq G$ be finite, nonempty subsets. Suppose $(A,B)$ is a type (III) elementary pair with $a_0+b_0$ the unique expression element in $A+B$, where $a_0\in A$ and $b_0\in B$. Then $$(A\setminus \{a_0\})+(B\setminus \{b_0\})=(A+B)\setminus \{a_0+b_0\}.$$
\end{lemma}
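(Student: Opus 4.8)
The plan is to exploit the rigidity that the unique expression element $a_0+b_0$ forces on a type (III) pair, together with a complementation identity. After translating $A$ and $B$, we may assume $A,B\subseteq K:=\langle A+B\rangle_*$ with $A+B=K$ and $|A|+|B|=|K|+1$ (by the definition of type (III)); write $t=a_0+b_0$, $A^c=K\setminus A$, $B^c=K\setminus B$, $A'=A\setminus\{a_0\}$, $B'=B\setminus\{b_0\}$, and note that since $|B|\ge 3$ we have $|A^c|,|B^c|\ge 2$, so every sumset below is of nonempty sets. For $c\in K$ let $r(c)=|A\cap(c-B)|$ count the representations $c=a+b$ with $a\in A$, $b\in B$. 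Since $|A|+|c-B|=|A|+|B|=|K|+1>|K|$, we get $r(c)\ge 1$ for all $c$, with $r(c)=1$ precisely when $A\cup(c-B)=K$; by hypothesis this occurs for exactly one $c$, namely $t$.

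The first key step extracts the rigid structure from $r(t)=1$: here $A\cap(t-B)=\{a_0\}$ and $A\cup(t-B)=K$, and comparing cardinalities (using $|t-B|=|B|=|A^c|+1$) forces $t-B=A^c\cup\{a_0\}$, hence $B=(t-A^c)\cup\{b_0\}$ and so $B'=t-A^c$; by the symmetric argument, $A'=t-B^c$. Consequently
\[
A'+B'=(t-B^c)+(t-A^c)=2t-(A^c+B^c),
\]
and the problem is reduced to computing $A^c+B^c$.

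The second key step is the identity $c-B^c=K\setminus(c-B)$, valid for every $c\in K$, which gives $A^c\cap(c-B^c)=K\setminus\bigl(A\cup(c-B)\bigr)$; hence $c\notin A^c+B^c$ if and only if $A\cup(c-B)=K$, i.e., by the first paragraph, if and only if $c=t$. Therefore $A^c+B^c=K\setminus\{t\}$, and substituting back, $A'+B'=2t-(K\setminus\{t\})=K\setminus\{t\}=(A+B)\setminus\{a_0+b_0\}$, as claimed. The inclusion $A'+B'\subseteq (A+B)\setminus\{a_0+b_0\}$ alone is immediate from the uniqueness of the representation of $t$; the substance is the reverse inclusion, and the main obstacle is recognizing that it is far cleaner to pass to the complements $A^c,B^c$ than to try to manufacture a representation $c=a'+b'$ with $a'\ne a_0$ and $b'\ne b_0$ directly, since the latter approach seems to require first ruling out that $A$ (or $B$) is periodic.
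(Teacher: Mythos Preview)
Your proof is correct and takes a genuinely different route from the paper's. The paper argues by contradiction: assuming some $g\neq a_0+b_0$ also lies outside $(A\setminus\{a_0\})+(B\setminus\{b_0\})$, it deduces that $g\in A\cap B$, and from $(\{0,g\}-A')\cap B'=\emptyset$ it forces each of $A'$ and $B'$ to decompose as a $\langle g\rangle$-periodic piece together with an arithmetic progression of difference $g$ starting at $g$; this eventually yields a second unique expression element in $A+B$, a contradiction. Your argument avoids all of this structural analysis by passing to complements: the single observation $r(c)=|K|+1-|A\cup(c-B)|$ simultaneously shows that $B'=t-A^c$, $A'=t-B^c$, and that $c\notin A^c+B^c\iff r(c)=1$, yielding $A'+B'=2t-(A^c+B^c)=K\setminus\{t\}$ in one stroke. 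Your approach is shorter, more symmetric, and makes the role of the hypothesis ``exactly one unique expression element'' completely transparent; the paper's approach, on the other hand, extracts finer structural information about $A$ and $B$ along the way (their near-periodicity with respect to $\langle g\rangle$), which is not needed here but is the kind of information one sometimes wants in Kemperman-style arguments.
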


\begin{proof}
 Without loss of generality, we may assume that
$a_0=b_0=0$ and $G=H$. Let $A'=A\setminus \{0\} $ and
$B'=B\setminus \{ 0\}$. Suppose by contradiction $\{0,g\}\subseteq G\setminus (A'+B')$ with $g\neq 0$. Since $g\in G= A+B$ and $g\notin A'+B'$, it follows that every expression $g=x+y\in A+B$, with $x\in A$ and $y\in B$, must have $x=0$ or $y=0$. As a result, since there are at least \emph{two} such expressions (as $0\in A+B$ is the only unique expression element for the type (III) pair), it follows that are exactly two, namely one of the form $g=0+y$ with $y\in B$, and the other of the form $g=x+0$ with $x\in A$, whence  \be\label{ghostly}g\in A\cap
B.\ee Since $0,\,g\notin A'+B'$, we have $(\{0,g\}-A')\cap B'=\emptyset$, and since $(A,B)$ has type (III), we have $|A'|+|B'|=|A|+|B|-2=|G|-1$. As a result, $|\{0,g\}-A'|\leq |G|-|B'|= |A'|+1$, which is easily seen to only be possible if  $A'=A'_1\cup P_1$, where $A'_1$ is $K$-periodic (or empty), $P_1$ is an arithmetic progression with difference $g$, and $K=\langle g\rangle$; moreover, since $g\in A'$ but $0\notin A'$ (see \eqref{ghostly}), we conclude that the first term in $P_1$ must in fact be $g$. Likewise $B'=B'_1 \cup P_2$ with
$B'_1$ $K$-periodic (or empty) and $P_2$ an arithmetic progression with difference $g$ whose first term is $g$. Thus $0\in P_1+K$ and $0\in P_2+K$. Hence, since $0+0$ is a unique expression element in $A+B$, it follows, in view of $A'=A'_1\cup P_1$ and $B'=B'_1 \cup P_2$, that $0$ is a unique expression element in $\phi_K(A)+\phi_K(B)$. Consequently, any unique expression element from $(P_1\cup \{0\})+(P_2\cup\{0\})$ is also a unique expression element in $A+B$.

Since $g$ is the first term in both $P_1$ and $P_2$, it follows that
$P_1\cup \{0\}$ and $P_2\cup\{0\}$ are both arithmetic progressions with difference $g$. Thus, since $(P_1\cup \{0\})+(P_2\cup\{0\})$ contains a unique expression element, namely $0+0$, it follows that $(P_1\cup \{0\})+(P_2\cup\{0\})$ must contain another unique expression element as well, namely $g_1+g_2$, where $g_1\in P_1$ is the last term of the progression $P_1$ and $g_2\in P_2$ is the last term of the progression $P_2$, contradicting (in view of the previous paragraph) that $0+0$ is the only unique expression element in $A+B$.
\end{proof}

The following is the `dual' formulation of the Kemperman Structure Theorem \cite[Theorem 9.2]{Grynk-book}, introduced by Lev \cite{lev-kemp}.

\begin{theirtheorem}[KST-Dual Form]\label{KST-}
Let $G$ be a nontrivial abelian group and let $A,\,B\subseteq G$ be finite, nonempty subsets. A necessary and sufficient condition for $$|A+B|=|A|+|B|-1,$$ with $A+B$ containing a unique expression element when $A+B$ is periodic, is that either $(A,B)$ is elementary of type (IV)  or else there exists a finite, proper subgroup $H<G$ and nonempty subsets $A_\emptyset \subseteq A$ and $B_\emptyset\subseteq B$ inducing $H$-quasi-periodic decompositions  such that
\begin{itemize}
\item[(i)] $(\phi_H(A),\phi_H(B))$ is elementary of some type (I)--(III),
\item[(ii)] $\phi_H(A_\emptyset)+\phi_H(B_\emptyset)$ is a unique expression element in $\phi_H(A)+\phi_H(B)$,
\item[(iii)] $|A_\emptyset+B_\emptyset|=|A_\emptyset|+|B_\emptyset|-1$, and
\item[(iv)] either $A_\emptyset+B_\emptyset$ is aperiodic or contains a unique expression element.
\end{itemize}
\end{theirtheorem}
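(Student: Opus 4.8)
The plan is to deduce Theorem \ref{KST-} from the standard, recursive form of the Kemperman Structure Theorem, \cite[Theorem 9.2]{Grynk-book}. Call a pair $(X,Y)$ of finite nonempty subsets of an abelian group \emph{Kemperman} if $|X+Y|=|X|+|Y|-1$ and $X+Y$ contains a unique expression element whenever it is periodic. The recursive theorem states that $(A,B)$ is Kemperman if and only if it is elementary of some type (I)--(IV), or else there are a nontrivial proper subgroup $H_1<G$ and nonempty $H_1$-coset slices $A'\subseteq A$, $B'\subseteq B$ inducing $H_1$-quasi-periodic decompositions such that $(\phi_{H_1}(A),\phi_{H_1}(B))$ is Kemperman, $\phi_{H_1}(A')+\phi_{H_1}(B')$ is a unique expression element of $\phi_{H_1}(A)+\phi_{H_1}(B)$, and $(A',B')$ is Kemperman. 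Theorem \ref{KST-} reorganizes this so that the recursion is read from the top: one jumps directly to the subgroup $H$ sitting just below the level at which the quotient pair first becomes elementary.

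The sufficiency direction of Theorem \ref{KST-} is essentially the easy ``construction'' half of \cite[Theorem 9.2]{Grynk-book}: a type (IV) pair is Kemperman by inspection, if conditions (i)--(iv) hold with $H$ nontrivial then $(A,B)$ satisfies exactly the second (non-elementary) alternative of the recursive theorem --- using that elementary pairs of types (I)--(III) are Kemperman --- and if $H=\{0\}$ then (i) forces $(A,B)$ itself to be elementary of type (I)--(III). One can also verify it directly: the composition fact recalled in the Preliminaries, applied using (ii), gives $A+B=\phi_H^{-1}\big((\phi_H(A)+\phi_H(B))\setminus\{u\}\big)\cup(A_\emptyset+B_\emptyset)$ disjointly, with $u:=\phi_H(A_\emptyset)+\phi_H(B_\emptyset)$ and the first set $H$-periodic (for this last point one checks that each $H$-coset lying over an element $\ne u$ of $\phi_H(A)+\phi_H(B)$ carries a full $H$-coset of $A$ plus a full $H$-coset of $B$ into $A+B$, which in the type (III) case is where Lemma \ref{thm-typeIII} enters); then $|A+B|=|A|+|B|-1$ follows by a short count using (iii) and the identity $|\phi_H(A)+\phi_H(B)|=|\phi_H(A)|+|\phi_H(B)|-1$ valid for elementary pairs of types (I)--(III); and if $A+B$ is periodic then (iv) produces a unique expression element of $A+B$ --- directly when $A_\emptyset+B_\emptyset$ has one (every representation of it in $A\times B$ projects to the unique representation of $u$, hence lies in $A_\emptyset\times B_\emptyset$), and vacuously when $A_\emptyset+B_\emptyset$ is aperiodic, because then any nonzero $\ell\in\mathsf H(A+B)$ must lie outside $H$, and invariance of $A+B$ under $\ell$ would carry the full coset $\phi_H^{-1}(u)$ into $A+B$, contradicting $\phi_H^{-1}(u)\cap(A+B)=A_\emptyset+B_\emptyset\subsetneq\phi_H^{-1}(u)$.

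For the necessity direction I would induct on $|A|+|B|$, applying the recursive theorem to $(A,B)$. If $(A,B)$ is type (IV), take the first alternative of Theorem \ref{KST-}. If $(A,B)$ is elementary of type (I), (II) or (III), it has a unique expression element $a_0+b_0$, and $H=\{0\}$, $A_\emptyset=\{a_0\}$, $B_\emptyset=\{b_0\}$ satisfy (i)--(iv) trivially. If $(A,B)$ is not elementary, the recursive theorem gives $H_1$, $A'$, $B'$ as above; here $|A|,|B|\ge 2$ and $|H_1|\ge 2$, so $|\phi_{H_1}(A)|+|\phi_{H_1}(B)|<|A|+|B|$, and $(\phi_{H_1}(A),\phi_{H_1}(B))$ is a Kemperman pair possessing the unique expression element $\phi_{H_1}(A')+\phi_{H_1}(B')$, hence is not type (IV). By induction, applied in $G/H_1$, there are a subgroup $H_1\le H_2<G$ and slices $\overline{A_\emptyset}\subseteq\phi_{H_1}(A)$, $\overline{B_\emptyset}\subseteq\phi_{H_1}(B)$ satisfying (i)--(iv) for $(\phi_{H_1}(A),\phi_{H_1}(B))$. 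I would then set $H:=H_2$, $A_\emptyset:=\phi_{H_1}^{-1}(\overline{A_\emptyset})\cap A$ and $B_\emptyset:=\phi_{H_1}^{-1}(\overline{B_\emptyset})\cap B$: these are single $H_2$-coset slices, $(\phi_{H_2}(A),\phi_{H_2}(B))$ is the further quotient of $(\phi_{H_1}(A),\phi_{H_1}(B))$ and so is elementary of type (I)--(III) by the inductive (i), and (ii) carries over verbatim. To obtain that $A\setminus A_\emptyset$ is $H_2$-periodic one composes ``$A$ is $H_1$-periodic off $A'$'' with ``$\phi_{H_1}(A)$ is $(H_2/H_1)$-periodic off $\overline{A_\emptyset}$''; and to obtain (iii)--(iv) one notes that, once $\phi_{H_1}(A')\in\overline{A_\emptyset}$ and $\phi_{H_1}(B')\in\overline{B_\emptyset}$ are known, $A'$ and $B'$ induce $H_1$-quasi-periodic decompositions of $A_\emptyset$ and $B_\emptyset$, $\phi_{H_1}(A')+\phi_{H_1}(B')$ is still a unique expression element of $\phi_{H_1}(A_\emptyset)+\phi_{H_1}(B_\emptyset)=\overline{A_\emptyset}+\overline{B_\emptyset}$, and $(\overline{A_\emptyset},\overline{B_\emptyset})$ is Kemperman by the inductive (iii)--(iv), so the converse half of the recursive theorem makes $(A_\emptyset,B_\emptyset)$ Kemperman, which is (iii)--(iv) for $(A,B)$.

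The step I expect to be the real work is the coherence claim $\phi_{H_1}(A')\in\overline{A_\emptyset}$ (and its analogue for $B$): that the distinguished slices produced at consecutive levels of the recursion nest correctly, so that the tower of quasi-periodic decompositions telescopes into a single $H$-quasi-periodic decomposition of $A$ and of $B$. Verifying it should come down to tracking how the unique expression element is transported along the projections between the elementary pairs --- the same mechanism that drives the proof of Lemma \ref{thm-typeIII} --- together with isolating the degenerate sub-cases in which one of the slices is in fact a full coset and a level of the tower collapses. The remainder of the argument is bookkeeping.
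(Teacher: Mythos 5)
This theorem is imported by the paper from the literature (Lev's dual formulation of the Kemperman Structure Theorem, cited as \cite[Theorem 9.2]{Grynk-book} and \cite{lev-kemp}); the paper gives no proof of it, so there is nothing internal to compare your argument against, and I can only judge the proposal on its own terms. Your strategy --- deriving the dual form from the recursive form of KST by telescoping the tower of quasi-periodic decompositions up to the subgroup $H$ at which the quotient pair first becomes elementary --- is the standard derivation, and your sufficiency direction is essentially complete. One small correction there: Lemma \ref{thm-typeIII} is not needed for that half. For any $w\neq u$ in $\phi_H(A)+\phi_H(B)$, every representation $w=\bar a+\bar b$ has $(\bar a,\bar b)\neq(\phi_H(A_\emptyset),\phi_H(B_\emptyset))$, so at least one of the two coset slices involved is a full $H$-coset and the $H$-coset of $A+B$ over $w$ is full; this works uniformly for types (I)--(III).

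The necessity direction, as you yourself flag, hinges on the coherence claim $\phi_{H_1}(A')\in\overline{A_\emptyset}$ and $\phi_{H_1}(B')\in\overline{B_\emptyset}$, and leaving it unproved is a genuine gap: without it, $A\setminus A_\emptyset$ still contains the deficient slice $A'$, so it is not even $H_1$-periodic, let alone $H_2$-periodic, and the construction collapses. The claim is true, and the clean way to close the gap is an auxiliary lemma: for a pair satisfying (i)--(iv) with subgroup $H$, \emph{every} unique expression element of the sumset lies over $u=\phi_H(A_\emptyset)+\phi_H(B_\emptyset)$. Indeed, if $w$ lies over some $\bar w\neq u$, then each representation $\bar w=\bar a+\bar b$ involves at least one full slice and hence contributes at least $\min\{|(\bar a+H)\cap A|,\,|(\bar b+H)\cap B|\}\geq 1$ representations of $w$, distinct quotient representations contributing disjoint families; a short type-by-type check (using that the ambient pair is non-elementary, so both its projections have at least two elements, and that $H$ is nontrivial in the case at issue) shows that either some representation of $\bar w$ has both slices full, giving $|H|\geq 2$ representations of $w$, or $\bar w$ has at least two quotient representations. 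Applying this lemma in $G/H_1$ to $(\phi_{H_1}(A),\phi_{H_1}(B))$ with its inductively obtained data, and recalling that the recursive theorem certifies $\phi_{H_1}(A')+\phi_{H_1}(B')$ as a unique expression element of $\phi_{H_1}(A)+\phi_{H_1}(B)$, forces that element to lie over $u_2$, whose unique representation then pins $\phi_{H_1}(A')$ into $\overline{A_\emptyset}$ and $\phi_{H_1}(B')$ into $\overline{B_\emptyset}$. (In the base case where $(\phi_{H_1}(A),\phi_{H_1}(B))$ is already elementary of type (I)--(III), coherence is free: choose the singletons $\overline{A_\emptyset}=\{\phi_{H_1}(A')\}$ and $\overline{B_\emptyset}=\{\phi_{H_1}(B')\}$.) With this inserted, the rest of your bookkeeping does go through.
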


If $G$ and $G'$ are abelian groups and $A,\,B\subseteq G$ are finite, nonempty subsets, then a Freiman homomorphism is a map $\psi:A+B\rightarrow G'$, defined by some coordinate maps $\psi_A:A\rightarrow G'$ and $\psi_B:B\rightarrow G'$, such that $\psi(x+y)=\psi_A(x)+\psi_B(y)$ for all $x\in A$ and $y\in B$ is well-defined. The sumset $\psi_A(A)+\psi_B(B)$ is then the homomorphic image of $A+B$ under $\psi$. If $\psi$ is injective on $A+B$, then $\psi$ is a Freiman isomorphism, in which case the sumsets $A+B$ and  $\psi_A(A)+\psi_B(B)$ are isomorphic, denoted $A+B\cong  \psi_A(A)+\psi_B(B)$. See \cite[Chapter 20]{Grynk-book}. Equivalently, if there are coordinate maps $\psi_A:A\rightarrow G'$ and $\psi_B:B\rightarrow G'$ such that $\psi_A(x)+\psi_B(y)=\psi_A(x')+\psi_B(y')$ if and only if $x+y=x'+y'$, for any $x,\,x'\in A$ and $y,\,y'\in B$, then $A+B\cong \psi_A(A)+\psi_B(B)$. Isomorphic sumsets have the same behavior with respect to their sumset irrespective of the ambient group in which they live.

The proof of Theorem \ref{thm-3k-4-minimprove} will involve the use of modular reduction, introduced by Lev and Smeliansky \cite{lev-smel-3k4}, in the more general form developed in \cite[Chapter 7]{Grynk-book}. We summarize the needed details from \cite[Chapter 7]{Grynk-book}.

Suppose $A,\,B\subseteq \Z$ are finite nonempty subsets and $n\geq 2$ is an integer.  Let $\phi_n:\Z\rightarrow \Z/n\Z$ denote the natural homomorphism.
For each $i\geq 0$, let $A_i\subseteq \Z/n\Z$ be the subset consisting of all $x\in\Z/n\Z$ for which there are least $i+1$ elements of $A$ congruent to $x$ modulo $n$. Thus $\phi_n(A)=A_0\supseteq A_1\supseteq A_2\supseteq \ldots$ and $\Summ{i\geq 0}|A_i|=|A|$. Likewise define $B_j$ for each $j\geq 0$, so $\phi_n(B)=B_0\supseteq B_1\supseteq B_2\supseteq \ldots$ and $\Summ{j\geq 0}|B_j|=|B|$. Set $$\wtilde A=\bigcup_{i\geq 0}(A_i\times \{i\})\quad\und\quad \wtilde B=\bigcup_{j\geq 0}(B_j\times\{j\}).$$ Thus $\wtilde A,\,\wtilde B\subseteq \Z/n\Z\times \Z$ with $|\wtilde A|=|A|$ and $|\wtilde B|=|B|$.
Then $\wtilde A+\wtilde B=\bigcup_{k\geq 0}(C_k\times \{k\})$, where $$C_k=\bigcup_{i+j=k}(A_i+B_j)$$ for $k\geq 0$. Thus $\phi_n(A+B)=C_0\supseteq C_1\supseteq C_2\supseteq\ldots$.
Let $G=\Z/n\Z$ and let $H\leq G$ be a subgroup. Consider an arbitrary $z\in G/H$, say corresponding to the coset $z'+H$. Let $k_z\geq 0$ be the maximal integer such that $z'+H\subseteq C_{k_z}$, or else set $k_z=-1$ if $z'+H\nsubseteq C_k$ for all $k\geq 0$. Set \begin{multline*}\delta_z=\max\Big(\{0\}\cup \big\{|(x+H)\cap A_i|+|(y+H)\cap B_j|-1-|H|-|(z+H)\cap C_{k_z+1}|:\\
 i+j=k_z,\; \phi_H(x)+\phi_H(y)=z\big\}\Big)\geq 0.\end{multline*}
Then \cite[Corollary 7.1]{Grynk-book} shows that $\wtilde A+\wtilde B$ can be used to estimate the size of $|A+B|$ as follows.

\begin{theirtheorem}
\label{modular-red-cor}
Let $A,\,B\subseteq \Z$ be finite, nonempty sets, let $n\geq 2$ be an integer, and let all notation be as above. Then
$$|A+B|\geq |\wtilde A+\wtilde B|+\Summ{z\in G/H}\delta_z.$$
\end{theirtheorem}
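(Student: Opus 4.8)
The plan is to estimate $|A+B|$ one residue class modulo $n$ at a time, matching the count in each class against the corresponding ``column'' of $\wtilde A+\wtilde B$, and then to recover the extra terms $\delta_z$ by a Kneser-type argument carried out inside the cosets of $H$. Throughout write $A^x=\{a\in A:\phi_n(a)=x\}$ and $m_A(x)=|A^x|$, so that $x\in A_i$ exactly when $m_A(x)\geq i+1$ (and similarly for $B$); for $t\in\Z/n\Z$ put $(A+B)^t=(A+B)\cap\phi_n^{-1}(t)=\bigcup_{x+y=t}(A^x+B^y)$ and $M_t=\max\big(\{0\}\cup\{m_A(x)+m_B(y)-1:\ x+y=t,\ A^x,B^y\neq\emptyset\}\big)$.

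First I would dispose of the easy half --- the case where $H$ is trivial and every $\delta_z=0$. Each $A^x+B^y$ lies in the coset $(x+y)+n\Z\cong\Z$, so Theorem \ref{CDT-for-Z} gives $|A^x+B^y|\geq m_A(x)+m_B(y)-1$ whenever both are nonempty, hence $|(A+B)^t|\geq M_t$; and since the sets $C_k$ are nested, $\{k\geq 0:t\in C_k\}=\{0,\dots,M_t-1\}$, so $|\wtilde A+\wtilde B|=\sum_k|C_k|=\sum_t M_t$ and summing the per-class bound yields $|A+B|\geq|\wtilde A+\wtilde B|$.

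For the refinement it suffices to prove, for each coset $w=z'+H$ of $H$ in $\Z/n\Z$, that $\sum_{t\in w}|(A+B)^t|\geq\sum_{t\in w}M_t+\delta_w$, and then sum over $z\in G/H$. I may assume $\delta_w>0$, so $k_w\geq 0$ and there are $i^*+j^*=k_w$ and cosets $u^*,v^*$ with $u^*+v^*=w$ attaining the defining maximum; put $X=A_{i^*}\cap u^*$ and $Y=B_{j^*}\cap v^*$. Because $w\subseteq C_{k_w}$ we have $M_t\geq k_w+1$ for all $t\in w$, with equality exactly when $t\notin C_{k_w+1}$. Each $x\in X$ has $m_A(x)\geq i^*+1$ and each $y\in Y$ has $m_B(y)\geq j^*+1$, so every $A^x+B^y$ with $x\in X$, $y\in Y$ has at least $k_w+1$ elements, lies in $(x+y)+n\Z$, and has least element the ``offset'' $\sigma(x,y)=\min A^x+\min B^y$. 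A residue class $t\in w$ that receives pairs from $X\times Y$ realizing $\mu_t$ distinct offsets then satisfies $|(A+B)^t|\geq k_w+\mu_t$, by a short interleaving estimate (the offsets differ by multiples of $n$, so the union of the corresponding translated $(k_w+1)$-element sets has $\geq k_w+\mu_t$ elements). Moreover $\sigma(x,y)$ determines the residue class $x+y$ (as $\sigma(x,y)\equiv x+y\bmod n$), so $\sum_{t\in w}\mu_t=|\widehat X+\widehat Y|$ with $\widehat X=\{\min A^x:x\in X\}$ and $\widehat Y=\{\min B^y:y\in Y\}$; since $x\mapsto\min A^x$ is injective, $|\widehat X|=|X|$ and $|\widehat Y|=|Y|$, and Theorem \ref{CDT-for-Z} gives $|\widehat X+\widehat Y|\geq|X|+|Y|-1$.

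The last step --- and the one I expect to be the real obstacle --- is to combine $\sum_{t\in w}\mu_t\geq|X|+|Y|-1$ with the per-class bounds $|(A+B)^t|\geq\max\{M_t,\,k_w+\mu_t\}$ so as to extract the precise gain $\delta_w=|X|+|Y|-1-|H|-|C_{k_w+1}\cap w|$. The difficulty is accounting for offsets that fall in residue classes of $C_{k_w+1}$ (or deeper), where $M_t$ may already swallow $k_w+\mu_t$; here I would invoke Kneser's Theorem (Theorem \ref{thm-kt}) inside the coset $w$ --- viewing $w\cong H$ and bounding the portion of $\widehat X+\widehat Y$ whose residues lie in $C_{k_w+1}$ in terms of the stabilizer of the relevant sumset --- to show that these ``wasted'' offsets cost at most $|H|+|C_{k_w+1}\cap w|$ altogether. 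The remaining cases are routine: if $k_w=-1$ the index set defining $\delta_w$ is empty, so $\delta_w=0$; if $X=\emptyset$, $Y=\emptyset$, or $X+Y=w$ the bound is immediate; and the identity $\sum_tM_t=|\wtilde A+\wtilde B|$ together with the non-overlap of the per-coset gains have already been noted.
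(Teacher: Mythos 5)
This statement is not proved in the paper at all --- it is quoted from \cite[Corollary 7.1]{Grynk-book} --- so there is no internal proof to compare against, and your proposal has to stand on its own. The first half of your outline is sound: the identity $|\wtilde A+\wtilde B|=\sum_t M_t$, the per-class bound $|(A+B)^t|\geq M_t$ from Theorem \ref{CDT-for-Z}, the reduction to proving a gain of $\delta_w$ inside each individual $H$-coset $w$, the interleaving estimate $|(A+B)^t|\geq k_w+\mu_t$, and the count $\sum_{t\in w}\mu_t=|\widehat X+\widehat Y|\geq |X|+|Y|-1$ are all correct.

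The decisive step, however, is genuinely missing, and the repair you propose would not supply it. Writing $P=w\setminus C_{k_w+1}$ and $Q=w\cap C_{k_w+1}$ with $c=|Q|$, the only bound you have that sees the offsets is $|(A+B)^t|\geq k_w+\mu_t$, which is dominated by the bound $|(A+B)^t|\geq M_t\geq k_w+2$ you must improve upon whenever $t\in Q$. Your accounting therefore yields a total gain of at least $\sum_{t\in P}(\mu_t-1)\geq |X|+|Y|-1-\sum_{t\in Q}\mu_t-(|H|-c)$, which meets the target $\delta_w=|X|+|Y|-1-|H|-c$ only if $\sum_{t\in Q}\mu_t\leq 2c$; nothing forces this, since a single class $t\in Q$ can receive up to $\min\{|X|,|Y|\}$ distinct offsets while contributing zero gain (all $\mu_t$ translated $(k_w+1)$-element blocks can sit inside the long block of length $M_t$ realizing $M_t$, so tracking only the minima $\min A^x+\min B^y$ is too lossy there). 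Invoking Kneser's Theorem (Theorem \ref{thm-kt}) ``inside the coset $w$'' does not address this: Kneser provides a \emph{lower} bound on $|\widehat X+\widehat Y|$, which you already have from Theorem \ref{CDT-for-Z}, whereas what you need is an \emph{upper} bound on how much of $\widehat X+\widehat Y$ concentrates over the residue classes of $C_{k_w+1}$, and no such bound follows from it. As it stands the argument is incomplete precisely at the point where the content of the theorem lies, and closing it requires a finer per-class matching between $\wtilde A+\wtilde B$ and $A+B$ than one extremal offset per pair.
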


We will use the above machinery in the case when $\min B=0$ and $n=\max B$. In such case,
$A_t\subseteq \ldots\subseteq A_0=\phi_n(A)\subseteq \Z/n\Z$, where $t\geq 0$ is the maximal index such that  $A_t\neq \emptyset$,
$\{0\}=B_1\subseteq B_0=\phi_n(B)\subseteq \Z/n\Z$ and $C_{t+1}\subseteq \ldots\subseteq C_0=\phi_n(A+B)\subseteq \Z/n\Z$,   $$|B_0|=|B|-1,\quad\und\quad \Sum{i=0}{t}|A_i|=|A|.$$
Now $\wtilde A+\wtilde B=\bigcup_{i=0}^{t+1}(C_i\times \{i\})$ with
 $C_0=A_0+B_0$, \ $C_{t+1}=A_t+B_1=A_t$ and $$C_i=(A_i+B_0)\cup (A_{i-1}+B_1)=(A_i+B_0)\cup A_{i-1}\quad\mbox{ for $i\in [1,t]$}.$$
If $H\leq G=\Z/n\Z$ is a subgroup, and $z\in (G/H)\setminus \phi_H(A_0)$, then set
$$\delta'_z=\max\Big(\{0\}\cup \big\{|(x+H)\cap A_0|+|(y+H)\cap B_0|-1-|H|:\;
  \phi_H(x)+\phi_H(y)=z\big\}\Big)\geq 0.$$
 As a special case of Theorem \ref{modular-red-cor}, we obtain the following corollary.

 \begin{corollary}\label{cor-modred}
 Let $A,\,B\subseteq \Z$ be finite, nonempty sets with $0=\min B$ and $n=\max B\geq 2$,  and let all notation be as above. Then
$$|A+B|\geq |A_0+B_0|+|A|+\underset{z\notin \phi_H(A_0)}{\Summ{z\in G/H}}\delta'_z.$$
 \end{corollary}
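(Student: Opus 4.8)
The plan is to deduce this straight from Theorem \ref{modular-red-cor} (equivalently \cite[Corollary 7.1]{Grynk-book}) applied with $n=\max B$ and with the given subgroup $H\le G=\Z/n\Z$, followed by some bookkeeping. First I would make the estimate $|A+B|\ge|\wtilde A+\wtilde B|+\sum_{z\in G/H}\delta_z$ explicit by computing $|\wtilde A+\wtilde B|=\sum_{i=0}^{t+1}|C_i|$. Using the structure recalled above ($C_0=A_0+B_0$; $C_i=(A_i+B_0)\cup A_{i-1}$ with $A_{i-1}\subseteq C_i$ for $i\in[1,t]$; and $C_{t+1}=A_t$), together with $\sum_{i=0}^{t}|A_i|=|A|$, this gives
$$|\wtilde A+\wtilde B|=|A_0+B_0|+\sum_{i=1}^{t}\big(|A_{i-1}|+|(A_i+B_0)\setminus A_{i-1}|\big)+|A_t|=|A_0+B_0|+|A|+R,$$
where $R:=\sum_{i=1}^{t}|(A_i+B_0)\setminus A_{i-1}|\ge 0$ (with $R=0$ when $t=0$). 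So it remains to show $R+\sum_{z\in G/H}\delta_z\ge\sum_{z\notin\phi_H(A_0)}\delta'_z$.

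The main step is the pointwise inequality $\delta'_z\le\delta_z+|(z+H)\cap C_1|$ for every $z\notin\phi_H(A_0)$, which is trivial when $\delta'_z=0$. When $\delta'_z>0$, fix a pair $x,y$ with $\phi_H(x)+\phi_H(y)=z$ attaining $\delta'_z$; then $|(x+H)\cap A_0|+|(y+H)\cap B_0|=\delta'_z+1+|H|\ge|H|+2$, and since any $S\subseteq u+H$ and $T\subseteq v+H$ with $|S|+|T|>|H|$ satisfy $S+T=(u+v)+H$ (pigeonhole), we get $z+H\subseteq A_0+B_0=C_0$, so $k_z\ge0$. If $k_z=0$, then taking $i=j=0$ and this same $x,y$ in the definition of $\delta_z$ gives $\delta_z\ge\delta'_z-|(z+H)\cap C_1|$. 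If instead $k_z\ge1$, then $z+H\subseteq C_{k_z}$; since $A_{k_z-1}\subseteq A_0$ is disjoint from $z+H$ while $C_{k_z}=(A_{k_z}+B_0)\cup A_{k_z-1}$ for $1\le k_z\le t$ and $C_{t+1}=A_t\subseteq A_0$ is impossible (as $z\notin\phi_H(A_0)$), we obtain $z+H\subseteq A_{k_z}+B_0\subseteq A_1+B_0\subseteq C_1$, so $|(z+H)\cap C_1|=|H|$; but $\delta'_z\le|H|+|H|-1-|H|=|H|-1<|H|$, so the inequality again holds.

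Finally I would sum the pointwise bound over $z\notin\phi_H(A_0)$. Since the cosets $z+H$ are pairwise disjoint and each element of $C_1=(A_1+B_0)\cup A_0$ whose $H$-coset avoids $A_0$ must lie in $(A_1+B_0)\setminus A_0$, we get $\sum_{z\notin\phi_H(A_0)}|(z+H)\cap C_1|\le|(A_1+B_0)\setminus A_0|\le R$ (the estimate reading $0\le 0$ when $t=0$, in which case $C_1=A_0$). Hence $\sum_{z\notin\phi_H(A_0)}\delta'_z\le\sum_{z\in G/H}\delta_z+R$, which combined with $|A+B|\ge|A_0+B_0|+|A|+R+\sum_{z\in G/H}\delta_z$ yields the claim. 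I expect the delicate point to be exactly the case $k_z\ge1$ with $z\notin\phi_H(A_0)$: there the optimal index choice in $\delta_z$ is forced to involve the higher slices $A_i$ with $i\ge1$ rather than $A_0$, so $\delta'_z$ cannot be compared with $\delta_z$ summand-by-summand and the discrepancy must be charged against the slack term $R=|\wtilde A+\wtilde B|-|A_0+B_0|-|A|$; making the accounting of the $|(z+H)\cap C_1|$ contributions line up with $R$ is what requires the care.
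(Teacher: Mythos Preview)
Your proof is correct and follows essentially the same route as the paper. The only cosmetic difference is that you compute $|\wtilde A+\wtilde B|=|A_0+B_0|+|A|+R$ exactly and then bound $\sum_{z\notin\phi_H(A_0)}|(z+H)\cap C_1|\le R$, whereas the paper skips the intermediate $R$ and directly lower-bounds $|\wtilde A+\wtilde B|\ge|A_0+B_0|+|A|+\sum_{z\notin\phi_H(A_0)}c_z$ (with $c_z:=|(z+H)\cap C_1|$) by splitting the level-$1$ slice $C_1$ into the part covered by $A_0$ and the part in cosets outside $\phi_H(A_0)$; the key pointwise inequality $c_z+\delta_z\ge\delta'_z$ and its case split on $k_z$ are identical in both arguments.
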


\begin{proof} For $z\in G/H$, let $c_z=|(z'+H)\cap C_{1}|$, where $z$ corresponds to the coset $z'+H$. Recall that $B_1=\{0\}$. Then,
by Theorem \ref{modular-red-cor}, we have \begin{align}\nn |A+B|&\geq |\wtilde A+\wtilde B|+\underset{z\notin \phi_H(A_0)}{\Summ{z\in G/H}}\delta_z\geq |A_0+B_0|+\Sum{i=0}{t}|A_i+B_1|+\underset{z\notin \phi_H(A_0)}{\Summ{z\in G/H}}c_z+\underset{z\notin \phi_H(A_0)}{\Summ{z\in G/H}}\delta_z\\\label{teel}
&=|A_0+B_0|+\Sum{i=0}{t}|A_i|+\underset{z\notin \phi_H(A_0)}{\Summ{z\in G/H}}(c_z+\delta_z)=|A_0+B_0|+|A|+\underset{z\notin \phi_H(A_0)}{\Summ{z\in G/H}}(c_z+\delta_z).
\end{align}
Consider an arbitrary $z\in G/H$ with $z\notin \phi_H(A_0)$. If $k_z\geq 1$, then  $c_{z}=|H|>\delta'_z$, with the inequality holding trivially by definition of $\delta'_z$, and the equality following from the definitions of $k_z$ and $c_z$.  Otherwise, it follows from the definitions involved that $c_z+\delta_z\geq \delta'_z$. Regardless, we find $\underset{z\notin \phi_H(A_0)}{\Summ{z\in G/H}}(c_z+\delta_z)\geq \underset{z\notin \phi_H(A_0)}{\Summ{z\in G/H}}\delta'_z$, which combined with \eqref{teel} yields the desired lower bound.
\end{proof}

The idea of using compression to estimate sumsets in higher dimensional spaces is a classical technique. See \cite[Section 7.3]{Grynk-book}. We outline briefly what we will need. Let $A,\,B\subseteq \R^2$ be finite, nonempty subsets. Let $x,\,y\in \R^2$ be a basis for $\R^2$. We can decompose $A=\bigcup_{\alpha\in I}A_\alpha$, where each $A_\alpha=(\alpha+\R x)\cap A\neq \emptyset$.  Then $|I|$ equals the number of lines parallel to the line $\R x$ that intersect $A$. We can likewise decompose $B=\bigcup_{\beta\in J}B_\beta$. The linear compression (with respect to $x$) of $A$ is the set $\mathsf C_{x,y}(A)$ obtained by taking $A$ and replacing the elements from each $A_\alpha$ by the arithmetic progression with difference $x$ and length $|A_\alpha|$ contained in $\alpha+\R x$ whose first term lies on the line $\R y$. We likewise define $\mathsf C_{x,y}(B)$. A simply argument (see \cite[eq. (7.18)]{Grynk-book}) shows
$$|A+B|\geq |\mathsf C_{x,y}(A)+\mathsf C_{x,y}(B)|.$$
Finally, we will need the following discrete analog of the Brunn-Minkowski Theorem for  two-dimensional sumsets \cite[Theorem 1.3]{oriol2D} \cite[Theorem 7.3]{Grynk-book}.

\begin{theirtheorem}
\label{2D-brunn-Mink} Let $A,\,B\subseteq \R^2$ be finite, nonempty subsets, let $\ell\subseteq \R^2$ be a line, let $m$ be the number of lines parallel to $\ell$ that intersect $A$, and let $n$ be the number of parallel lines to $\ell$ that intersect $B$. Then
$$|A+B|\geq \Big(\frac{|A|}{m}+\frac{|B|}{m}-1\Big)(m+n-1).$$
\end{theirtheorem}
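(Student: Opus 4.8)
Since Theorem~\ref{2D-brunn-Mink} is quoted from external sources, I will sketch a short self-contained proof by averaging over monotone lattice paths. Fix a direction vector $x$ of $\ell$ and a vector $y$ completing it to a basis of $\R^2$, so that the lines parallel to $\ell$ are precisely the lines $\alpha+\R x$ with $\alpha$ on the transversal $\R y$. Write $A=A_1\cup\cdots\cup A_m$ and $B=B_1\cup\cdots\cup B_n$ for the decompositions into nonempty slices on these lines, indexed so that the corresponding transversal values satisfy $\alpha_1<\cdots<\alpha_m$ and $\beta_1<\cdots<\beta_n$, and set $a_i=|A_i|$ and $b_j=|B_j|$, so $\sum_i a_i=|A|$ and $\sum_j b_j=|B|$. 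The geometric input is this: for any monotone lattice path $(i_1,j_1)=(1,1),(i_2,j_2),\ldots,(i_{m+n-1},j_{m+n-1})=(m,n)$---each step raising exactly one coordinate by $1$---the values $v_k:=\alpha_{i_k}+\beta_{j_k}$ are strictly increasing, hence distinct, so the nonempty sets $A_{i_k}+B_{j_k}\subseteq A+B$ lie on $m+n-1$ distinct lines and are therefore pairwise disjoint; applying the one-dimensional bound of Theorem~\ref{CDT-for-Z} on each of those lines and summing gives
$$|A+B|\;\geq\;\sum_{k=1}^{m+n-1}\bigl(a_{i_k}+b_{j_k}-1\bigr)$$
for every monotone lattice path.

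The point is to choose the path by averaging. A monotone lattice path from $(1,1)$ to $(m,n)$ is the same datum as a word with $m-1$ letters $E$ and $n-1$ letters $N$; writing it as $N^{c_1}EN^{c_2}E\cdots EN^{c_m}$, the number of path vertices in column $i$ equals $p_i=c_i+1$, and $(c_1,\ldots,c_m)$ ranges over all nonnegative integer solutions of $c_1+\cdots+c_m=n-1$. Choosing the path uniformly at random (write $\mathbb E$ for the resulting expectation) makes $(c_1,\ldots,c_m)$ uniform and hence exchangeable, so $\mathbb E[p_i]=\tfrac{n-1}{m}+1=\tfrac{m+n-1}{m}$ for every $i$; writing the same word as $E^{d_1}NE^{d_2}N\cdots NE^{d_n}$ shows symmetrically that the number $q_j$ of vertices in row $j$ satisfies $\mathbb E[q_j]=\tfrac{m-1}{n}+1=\tfrac{m+n-1}{n}$ for every $j$. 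Taking expectations in the displayed inequality, with $\sum_k a_{i_k}=\sum_i p_i a_i$ and $\sum_k b_{j_k}=\sum_j q_j b_j$, yields
$$\mathbb E\Bigl[\sum_{k=1}^{m+n-1}\bigl(a_{i_k}+b_{j_k}-1\bigr)\Bigr]=\frac{m+n-1}{m}\,|A|+\frac{m+n-1}{n}\,|B|-(m+n-1)=\Bigl(\frac{|A|}{m}+\frac{|B|}{n}-1\Bigr)(m+n-1),$$
so some path attains at least this value, and the previous inequality completes the proof. (The bound is thus obtained with $|B|/n$ in the second term: this is the form actually needed in the sequel---with $A$ spread over $s$ lines and $B$ over two, so that $m+n-1=s+1$---and the one appearing in the cited references.)

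I expect essentially no obstacle beyond hitting on the averaging idea, which is the whole content of the argument. The remaining points need only a little care and are bookkeeping: verifying that a monotone path really meets $m+n-1$ distinct sum-lines (strict monotonicity of $v_k$, which uses that the $\alpha_i$ and the $\beta_j$ are strictly ordered), and the dictionary between ``vertices in column $i$'' and the gap-composition $(c_1,\ldots,c_m)$, cross-checked against $\sum_i p_i=m+n-1$. One could instead exhibit a single sufficiently balanced path and argue deterministically, but the averaging formulation is shorter and yields the sharp constant with no rounding losses.
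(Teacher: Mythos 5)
Your proof is correct, and it is necessarily a different route from the paper's, because the paper offers none: Theorem~\ref{2D-brunn-Mink} is imported verbatim from \cite{oriol2D} and \cite[Theorem 7.3]{Grynk-book}. You are also right that the second fraction in the displayed bound must read $|B|/n$ rather than $|B|/m$ (as printed the statement already fails for $|A|=m=1$ with $B$ consisting of one point on each of $n$ lines), and $|B|/n$ is the form the paper actually invokes through Lemma~\ref{lemma-2D-redcalc}. Your path-averaging argument is sound: strict monotonicity of $\alpha_{i_k}+\beta_{j_k}$ along a monotone lattice path places the $m+n-1$ slice-sums $A_{i_k}+B_{j_k}$ on distinct parallel lines, the one-dimensional bound applies on each such line (formally Theorem~\ref{CDT-for-Z} is stated for $\Z$, but the usual chain argument, or a Freiman isomorphism into $\Z^2$, covers finite subsets of a real line), and the uniform distribution on monotone words is exchangeable in the gap-composition, giving $\mathbb E[p_i]=(m+n-1)/m$ and $\mathbb E[q_j]=(m+n-1)/n$ exactly as you compute, so some path beats the average. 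By contrast, the proof in the cited sources proceeds via compressions and induction and yields additional structural information beyond the cardinality bound; your argument is shorter and entirely elementary, but delivers only the inequality --- which is all the present paper uses.
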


\section{The Proof}

We begin with a lemma showing that a pair of sets
$A,\,B\subseteq \Z$ being short arithmetic progressions modulo $N$ with common difference forces the sumset $A+B$ to be isomorphic to a two-dimensional sumset from $\Z^2$.

\begin{lemma}\label{Lemma-ap-mod-reduction}
Let $A,\,B\subseteq \Z$ be finite, nonempty subsets, let $N\geq 1$ be an integer, and let $\varphi:\Z\rightarrow \Z/N\Z$ be the natural homomorphism. Suppose $\varphi(A)$ and $\varphi(B)$ are arithmetic progressions with common difference $d\in [1,N-1]$ modulo $N$ such that $|\varphi(A)|+|\varphi(B)|-1\leq \ord(\varphi(d))$.
 Then there is a Freiman isomorphism $$A+B\cong \bigcup_{i=0}^{m-1}(X_i\times\{i\})+\bigcup_{j=0}^{n-1}(Y_j\times \{j\})\subseteq \Z^2,$$ where $A=A_0\cup \ldots\cup A_{m-1}$ and $B=B_0\cup\ldots \cup B_{n-1}$ are the partitions of $A$ and $B$ into distinct residue classes modulo $N$ indexed so that $\varphi(A_i)-\varphi(A_{i-1})=\varphi(B_j)-\varphi(B_{j-1})=\varphi(d)$ for all $i\in [1,m-1]$ and $j\in [1,n-1]$, with $\alpha_0\in A_0$, \ $\beta_0\in B_0$, \  $\alpha_i=\alpha_0+id$, \ $\beta_j=\beta_0+jd$, \ $X_i=\frac{1}{N}\cdot (A_i-\alpha_i)\subseteq \Z$ and $Y_i=\frac{1}{N}\cdot (B_j-\beta_j)\subseteq \Z$, for $i\in [0,m-1]$ and $j\in [0,n-1]$.
\end{lemma}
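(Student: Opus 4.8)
The plan is to write down explicitly the natural ``unfolding'' map sending the pair $(A,B)$ of short progressions modulo $N$ to a pair of subsets of $\Z^2$, and to verify it is a Freiman isomorphism using the order hypothesis $|\varphi(A)|+|\varphi(B)|-1\le\ord(\varphi(d))$.

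First I would pin down the notation of the statement. Since $\varphi(A)$ is an arithmetic progression with difference $\varphi(d)$ of length $|\varphi(A)|\le |\varphi(A)|+|\varphi(B)|-1\le \ord(\varphi(d))$, its terms $\varphi(A_0),\,\varphi(A_0)+\varphi(d),\ldots,\varphi(A_0)+(m-1)\varphi(d)$ are pairwise distinct, so $m=|\varphi(A)|$ and the decomposition $A=A_0\cup\cdots\cup A_{m-1}$ with $\varphi(A_i)-\varphi(A_{i-1})=\varphi(d)$ is well-defined; similarly $B=B_0\cup\cdots\cup B_{n-1}$ with $n=|\varphi(B)|$. Choosing $\alpha_0\in A_0$, $\beta_0\in B_0$ and setting $\alpha_i=\alpha_0+id$, $\beta_j=\beta_0+jd$, we get $\varphi(A_i)=\varphi(\alpha_0)+i\varphi(d)=\varphi(\alpha_i)$, so $A_i-\alpha_i\subseteq N\Z$ and $X_i:=\tfrac1N\cdot(A_i-\alpha_i)\subseteq\Z$ is a genuine subset of $\Z$, and likewise $Y_j:=\tfrac1N\cdot(B_j-\beta_j)\subseteq\Z$.

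Next I would define the coordinate maps $\psi_A\colon A\to\Z^2$ by $\psi_A(a)=\big(\tfrac{a-\alpha_i}{N},\,i\big)$ for $a\in A_i$, and $\psi_B\colon B\to\Z^2$ by $\psi_B(b)=\big(\tfrac{b-\beta_j}{N},\,j\big)$ for $b\in B_j$; these are well-defined since the residue-class decompositions are disjoint, and their images are $\psi_A(A)=\bigcup_{i=0}^{m-1}(X_i\times\{i\})$ and $\psi_B(B)=\bigcup_{j=0}^{n-1}(Y_j\times\{j\})$. Using $\alpha_i+\beta_j=\alpha_0+\beta_0+(i+j)d$, one computes, for $a\in A_i$ and $b\in B_j$,
$$\psi_A(a)+\psi_B(b)=\Big(\tfrac{a+b-\alpha_0-\beta_0-(i+j)d}{N},\;i+j\Big).$$
By the ``equivalently'' criterion for a Freiman isomorphism recalled in the Preliminaries, it then suffices to check that $\psi_A(a)+\psi_B(b)=\psi_A(a')+\psi_B(b')$ holds if and only if $a+b=a'+b'$, for all $a\in A_i$, $a'\in A_{i'}$, $b\in B_j$, $b'\in B_{j'}$.

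For the verification: if $\psi_A(a)+\psi_B(b)=\psi_A(a')+\psi_B(b')$, then comparing second coordinates gives $i+j=i'+j'$, and then comparing first coordinates gives $a+b=a'+b'$. Conversely, if $a+b=a'+b'$, reducing modulo $N$ and using $\varphi(a)=\varphi(\alpha_0)+i\varphi(d)$, $\varphi(b)=\varphi(\beta_0)+j\varphi(d)$, etc., yields $(i+j)\varphi(d)=(i'+j')\varphi(d)$, hence $\ord(\varphi(d))\mid (i+j)-(i'+j')$; but $0\le i+j,\,i'+j'\le m+n-2=|\varphi(A)|+|\varphi(B)|-2<\ord(\varphi(d))$, forcing $i+j=i'+j'$, after which the displayed formula shows the two $\psi$-sums coincide. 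The only nonroutine point is precisely this last implication: it is exactly the hypothesis $|\varphi(A)|+|\varphi(B)|-1\le\ord(\varphi(d))$ that prevents two distinct index-sums $i+j\ne i'+j'$ from producing the same integer sum $a+b=a'+b'$ (which would destroy injectivity of $\psi$ on $A+B$); everything else is bookkeeping.
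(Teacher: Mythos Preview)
Your proof is correct and follows essentially the same approach as the paper: define the coordinate maps $\psi_A(a)=(\tfrac{a-\alpha_i}{N},i)$ and $\psi_B(b)=(\tfrac{b-\beta_j}{N},j)$, then verify the Freiman isomorphism criterion in both directions, using the order hypothesis $m+n-1\le\ord(\varphi(d))$ precisely to force $i+j=i'+j'$ from $a+b\equiv a'+b'\pmod N$. Your write-up is arguably a bit cleaner in that you explicitly display the combined formula for $\psi_A(a)+\psi_B(b)$, making the two-coordinate comparison transparent.
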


\begin{proof}
Let $d\in [1,N-1]\subseteq \Z$ be the common difference  modulo $N$ for the arithmetic progressions $\varphi(A)$ and $\varphi(B)$, and let $\alpha_0\in A_0$ and $\beta_0\in B_0$. Set \be\label{def-alphbeta}\alpha_i=\alpha_0+id \quad \und\quad \beta_j= \beta_0+jd,\quad\mbox{ for $i\in [0,m-1]$ and $j\in [0,n-1]$}.\ee
Then each $\alpha_i$ is a representative modulo $N$ for the residue classes $A_i$, and each $\beta_j$
is a representative modulo $N$ for the residue classes $B_j$, for $i\in [0,m-1]$ and $j\in [0,n-1]$.
Note  $$m+n-1=|\varphi(A)|+|\varphi(B)|-1\leq \ord(\varphi(d))$$ by hypothesis. As a result, \be\label{cond-iso}\alpha_i+\beta_j\equiv \alpha_{i'}+\beta_{j'}\mod N\quad\mbox{ if and only if }\quad i+j=i'+j'.\ee
For $i\in [0,m-1]$ and $j\in [0,n-1]$, set $X_i=\frac{1}{N}\cdot (A_i-\alpha_i)\subseteq \Z$ and $Y_i=\frac{1}{N}\cdot (B_j-\beta_j)\subseteq \Z$. Thus $A_i=\alpha_i+N\cdot X_i$ and $B_j=\beta_j+N\cdot Y_j$ for $i\in [0,m-1]$ and $j\in [0,n-1]$. Define the maps $\varphi_A:A\rightarrow \Z^2$ and $\varphi_B:B\rightarrow \Z^2$ by $$\varphi_A(\alpha_i+Nx)=(x,i)\quad\und\quad \varphi_B(\beta_j+Ny)=(y,j),$$ where $x\in X_i$ and $y\in Y_j$. Then $\varphi_A$ and $\varphi_B$ are clearly injective on $A$ and $B$, respectively.

Suppose $(\alpha_i+Nx)+(\beta_j+Ny)=(\alpha_{i'}+Nx')+(\beta_{j'}+Ny')$. Reducing modulo $N$ and applying \eqref{cond-iso}, it follows that $i+j=i'+j'$, in turn implying $\alpha_i+\beta_j=\alpha_{i'}+\beta_{j'}$ per the definitions in \eqref{def-alphbeta}. But now $(\alpha_i+Nx)+(\beta_j+Ny)=(\alpha_{i'}+Nx')+(\beta_{j'}+Ny')$ implies $N(x+y)=N(x'+y')$, and thus $x+y=x'+y'$ as $N\neq 0$. It follows that $$\varphi_A(\alpha_i+Nx)+\varphi_B(\beta_j+Ny)=(x+y,i+j)=(x'+y',i'+j')=
\varphi_A(\alpha_{i'}+Nx')+\varphi_B(\beta_{j'}+Ny').$$ Conversely, if $\varphi_A(\alpha_i+Nx)+\varphi_B(\beta_j+Ny)=
\varphi_A(\alpha_{i'}+Nx')+\varphi_B(\beta_{j'}+Ny')$, then $(x+y,i+j)=(x'+y',i'+j')$ follows, implying $x+y=x'+y'$ and $i+j=i'+j'$. Hence \eqref{def-alphbeta} ensures $\alpha_i+\beta_j=\alpha_{i'}+\beta_{j'}$, and now $$(\alpha_i+Nx)+(\beta_j+Ny)=\alpha_{i}+\beta_{j}+N(x+y)=\alpha_{i'}+\beta_{j'}+N(x'+y')=
(\alpha_{i'}+Nx')+(\beta_{j'}+Ny').$$ This shows that $A+B$ is Freiman isomorphic to the sumset $\varphi_A(A)+\varphi_B(B)= \bigcup_{i=0}^{m-1}(X_i\times\{i\})+\bigcup_{j=0}^{n-1}(Y_j\times \{j\})\subseteq \Z^2$, completing the proof.
\end{proof}

\begin{lemma}\label{lemma-2D-redcalc}
Let $x\geq 1$ and $y\geq 3$ be integers  and let $s\geq 1$ be the integer with  $$(s-1)s(\frac{y}{2}-1)+s-1< x\leq s(s+1)(\frac{y}{2}-1)+s.$$ Then $$\min\left\{\left\lceil(\frac{x}{m}+\frac{y}{n}-1)(m+n-1)\right\rceil:\; m,n\in \Z,\,x\geq m\geq 1,\,\frac{y}{3}+1\geq n\geq 2\right\}=\left\lceil(\frac{x}{s}+\frac{y}{2}-1)(s+1)\right\rceil.$$
\end{lemma}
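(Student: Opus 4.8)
The plan is to treat the expression $f(m,n) = \left(\frac{x}{m}+\frac{y}{n}-1\right)(m+n-1)$ as a function of the real variables $m,n$ and first locate its minimum over the relevant region, then handle the integrality of $m$, $n$ and the ceiling. First I would fix $n=2$ and study $g(m) := f(m,2) = \left(\frac{x}{m}+\frac{y}{2}-1\right)(m+1)$; expanding gives $g(m) = x + \frac{x}{m} + (\frac{y}{2}-1)m + (\frac{y}{2}-1)$, so by the AM--GM inequality $\frac{x}{m}+(\frac{y}{2}-1)m$ is minimized at the real point $m = \sqrt{x/(\frac{y}{2}-1)}$, with the minimum increasing as $m$ moves away from this point in either direction. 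Over the integers, the optimal $m$ is therefore one of the two integers bracketing $\sqrt{x/(\frac{y}{2}-1)}$; I would show that the integer $s$ defined by the hypothesis $(s-1)s(\frac{y}{2}-1)+s-1 < x \le s(s+1)(\frac{y}{2}-1)+s$ is exactly the integer minimizing $g$ over $\Z_{\ge 1}$. Concretely, $g(s)\le g(s-1)$ and $g(s)\le g(s+1)$ should both reduce, after clearing denominators, precisely to the two inequalities defining $s$: $g(s)\le g(s+1)$ becomes $\frac{x}{s}-\frac{x}{s+1} \ge (\frac{y}{2}-1)$, i.e. $x \le s(s+1)(\frac{y}{2}-1)$ up to the $+s$ correction term coming from the $(m+1)$ factor, and similarly for the other side.

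The second step is to show that $n=2$ is optimal among all admissible $n\ge 2$. For fixed $m$, I would compare $f(m,n)$ with $f(m,2)$. Writing $f(m,n) = \left(\frac{x}{m}+\frac{y}{n}-1\right)(m-1) + \left(\frac{x}{m}+\frac{y}{n}-1\right)n = \left(\frac{x}{m}+\frac{y}{n}-1\right)(m-1) + \frac{xn}{m} + y - n$, one sees the coefficient structure: increasing $n$ shrinks the term $\frac{y}{n}$ but enlarges the factor $(m+n-1)$ and the term $\frac{xn}{m}$. The key point is that the restriction $n \le \frac{y}{3}+1$ (equivalently $y \ge 3(n-1)$, which holds automatically since in the application $n$ counts residue classes of a set of size $y$ hit in an arithmetic-progression pattern and each class has size $\ge 3$ in the type-(III)/type-(II) reduction) keeps $\frac{y}{n}$ large enough that the trade is never favorable. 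I expect $\frac{\partial f}{\partial n} = \frac{x}{m} + 1 - \frac{y}{n^2}(m+n-1) - \frac{y}{n}$; at $n=2$ this is $\frac{x}{m}+1-\frac{y}{4}(m+1) - \frac{y}{2}$, and I would need the constraint on $x$ (via $s$, hence $m\le x$ and the bracketing bounds) together with $n\le \frac{y}{3}+1$ to force this derivative to be $\ge 0$ throughout, so that $n=2$ is the minimizer — or, more carefully, a direct comparison $f(m,n)\ge f(m,2)$ done by cross-multiplying to avoid sign subtleties near the boundary.

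Combining the two steps gives $\min f = f(s,2) = \left(\frac{x}{s}+\frac{y}{2}-1\right)(s+1)$ over the reals subject to the integrality of $m$; since $f(s,2)$ already has $m=s\in\Z$ and $n=2\in\Z$, applying $\lceil\cdot\rceil$ and noting that the ceiling is monotone (so $\lceil f(m,n)\rceil \ge \lceil f(s,2)\rceil$ for every admissible integer pair) finishes the identity. The one mild subtlety is that the problem asks for the minimum of $\lceil f(m,n)\rceil$, not $\lceil \min f(m,n)\rceil$; but monotonicity of the ceiling means $\lceil f(m,n)\rceil \ge \lceil f(s,2)\rceil$ for all admissible $(m,n)$, and equality is achieved at $(s,2)$, so the two agree.

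\medskip
\emph{The main obstacle.} I expect the genuinely delicate part to be the second step — proving $n=2$ is optimal. The inequality $f(m,n)\ge f(m,2)$ is a polynomial inequality in $m,n,x,y$ that is \emph{not} true without the hypotheses: it can fail for large $n$ (if the cap $n\le \frac{y}{3}+1$ were dropped) or for $x$ outside the range pinned down by $s$. So the argument must use both constraints in a coupled way, and the cleanest route is probably to reduce to showing $f(m,n)-f(m,2)\ge 0$ by writing it as a sum of manifestly nonnegative terms after substituting $y \ge 3(n-1)$ and $x > (s-1)s(\frac{y}{2}-1)+s-1$; getting the bookkeeping of these substitutions right — especially near $n=\frac{y}{3}+1$ and near the lower boundary for $x$ — is where the care is needed. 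The first step (optimality of $m=s$ for $n=2$), by contrast, is a routine one-variable convexity argument that should fall out directly from the defining inequalities for $s$.
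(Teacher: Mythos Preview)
Your first step is correct and matches the paper's Case~1: for $n=2$ the function $g(m)=f(m,2)$ is convex in $m$, and the defining inequalities for $s$ are exactly $g(s)\le g(s-1)$ and $g(s)\le g(s+1)$.

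The second step, however, has a genuine gap. You propose to show $f(m,n)\ge f(m,2)$ for each fixed $m$, but this inequality is \emph{false} in general on the admissible region. A short computation gives
\[
f(m,n)-f(m,2)=(n-2)\Bigl(\tfrac{x}{m}-1-\tfrac{y(m-1)}{2n}\Bigr),
\]
so for $n\ge 3$ the sign is that of $\tfrac{x}{m}-1-\tfrac{y(m-1)}{2n}$, which is negative whenever $m$ is large relative to $\sqrt{x}$. For a concrete counterexample take $y=6$ (so $n\in\{2,3\}$), $x=10$ (so $s=2$), and $m=4$: then $f(4,3)=21<22.5=f(4,2)$. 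The substitution $x>(s-1)s(\tfrac{y}{2}-1)+s-1$ you suggest cannot rescue this, because the bound involves $s$ while the needed inequality must hold for arbitrary $m\le x$, and $m$ can be much larger than $s$.

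What actually works---and what the paper does---is to compare $f(m,n)$ \emph{directly} to $f(s,2)$ rather than to $f(m,2)$. After clearing denominators in $f(m,n)<f(s,2)+\tfrac{1}{s}$ one obtains a linear inequality in $x$ whose coefficient is $2n(s(n-1)-m)$. The paper then splits on the sign of $s(n-1)-m$: when it is nonnegative one substitutes the \emph{lower} bound $x\ge(s-1)s(\tfrac{y}{2}-1)+s$, and when it is negative one substitutes the \emph{upper} bound $x\le s(s+1)(\tfrac{y}{2}-1)+s$. In each case one then uses $y\ge 3(n-1)$ and reduces to a quadratic in $m$ with negative discriminant. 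The point is that \emph{both} endpoints of the interval for $x$ are needed, one in each case, and the case split is governed by how $m$ compares to $s(n-1)$---information that is invisible if you fix $m$ and only vary $n$.
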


\begin{proof}
Assuming the lemma fails, we obtain
\be\label{failhyp} (\frac{x}{m}+\frac{y}{n}-1)(m+n-1)-(\frac{x}{s}+\frac{y}{2}-1)(s+1)+\frac{1}{s}\leq 0\ee for some integers $m\geq 1$ and  $n\geq 2$ with $y\geq 3n-3$ and $x\geq m$ (note $(\frac{x}{s}+\frac{y}{2}-1)(s+1)$ can be expressed as a rational fraction with denominator $s$ regardless of the parity of $s$). Multiplying \eqref{failhyp} by $2smn$ yields
\be\label{failhyper} 2n(s(n-1)-m)x+sm(2m-2-(s-1)n)y-2smn(m+n-s-2)+2mn\leq 0\ee

\subsection*{Case 1:} $n=2$.

\begin{proof}
In this case, \eqref{failhyper} yields $2(s-m)x\leq sm(s-m)y-2sm(s-m)-2m$, implying $m\neq s$. If $m\leq s-1$, then  we obtain $x\leq sm(y/2-1)-\frac{m}{s-m}$. Considering this upper bound as a function of $m$, we find that its discrete derivative (its value  at $m+1$ minus its value at $m$) equals $s(\frac{y}{2}-1-\frac{1}{(s-m)(s-m-1)})\geq 0$ (for $m\leq s-2$), meaning it is maximized when $m$ achieves the upper bound $m=s-1$, yielding $x\leq s(s-1)(y/2-1)-s+1$, contrary to hypothesis. On the other hand, if $m\geq s+1$, then we obtain $x\geq sm(y/2-1)+\frac{m}{m-s}$. Considering this lower bound as a function of $m$,  we find that its discrete derivative (its value  at $m+1$ minus its value at $m$) equals $s(\frac{y}{2}-1-\frac{1}{(m-s)(m+1-s)})\geq 0$ (for $m\geq s+1$), meaning it is minimized when $m$ achieves the lower bound $m=s+1$, yielding $x\geq s(s+1)(y/2-1)+s+1$, contrary to hypothesis, completing the case.
\end{proof}

In view of Case 1, we now assume $n\geq 3$.

\subsection*{Case 2:} $s(n-1)\geq m$.

\begin{proof}

In this case, the coefficient of $x$ in \eqref{failhyper} is non-negative.

\medskip

Suppose first that $s=1$, in which case the coefficient of $y$ in \eqref{failhyper} is also non-negative. Thus using the estimates $x\geq m$ and $y\geq 3n-3$ in \eqref{failhyper}, followed by the estimate $n\geq 3$ (in view of Case 1), yields the contradiction (dividing all terms by $2m$)
$$0\geq nm-3m+3\geq 3.$$ So we now assume $s\geq 2$.

\medskip

 As the coefficient of $x$ in \eqref{failhyper} is non-negative, applying  the hypothesis $x\geq s(s-1)(y/2-1)+s$ yields
\be\label{hum}\Big(s(s-1)n^2-(s-1)(s+2m)n+2m^2-2m\Big)y-2(s^2-2s+m)n^2-2(m^2-2sm-\frac{m}{s}
-s^2+2s)n\leq 0.\ee
%

We next need to show that the coefficient of $y$ in \eqref{hum} is non-negative. To this end, assume by contradiction that \be\label{ycoeff}s(s-1)n^2-(s-1)(s+2m)n+2m^2-2m<0.\ee Since $m$ and $s$ are positive integers, \eqref{ycoeff} fails  for $s=1$, allowing us to assume $s\geq 2$.
Thus \eqref{ycoeff} is quadratic in $n$ with positive lead coefficient. The expression in \eqref{ycoeff} has non-negative derivative for  $n\geq \frac{s+2m}{2s}$.
Consequently, since our case hypothesis gives $n\geq\frac{m}{s}+1>\frac{s+2m}{2s}$, we conclude that the derivative with respect to $n$ in \eqref{ycoeff} is non-negative.
In particular,  \eqref{ycoeff} must hold with $n=\frac{m+s}{s}$, yielding $$(s+1)m(m-s)<0.$$
Thus $m\leq s-1$.
Since the derivative with respect to $n$ in \eqref{ycoeff} is non-negative for $n\geq \frac{s+2m}{2s}$ and
$n\geq 2>\frac{s+2m}{2s}$ (as $m\leq s-1$), it follows that \eqref{ycoeff} must also hold for $n=2$, yielding
$$2(m-s)(m-s+1)<0,$$ which contradicts that $m\leq s-1$. So we conclude that \eqref{ycoeff} fails, meaning the coefficient of $y$ in \eqref{failhyper} is non-negative.

As a result, applying the hypothesis $y\geq 3n-3$  in \eqref{hum}   yields \be\label{humdull}(4n-6)m^2-(n^2(6s-4)-(10s-12+\frac{2}{s})n-6)m+sn(n-1)(3(s-1)n-5s+7)\leq 0.\ee
The above expression is quadratic in $m$ with positive lead coefficient $4n-6>0$ (as $n\geq 2$) and discriminant  equal to $4$ times the quantity
\be\label{discr}-n(n-2)(n-3)(3n-5)s^2-2n(n-2)(n-3)s+(4n^4-30n^3+58n^2-36n+9)+\frac{n^2+s(4n^3
-12n^2+6n)}{s^2}\ee
Since $n\geq 3$ is an integer, the derivative with respect to $s$ of \eqref{discr} is negative, meaning   \eqref{discr} is maximized for $s=2$, in which case it equals
$-8n^4+48n^3-100n^2+63n+\frac14 n^2+9$, which is negative for $n\geq 2$ (it has two complex roots with largest real root less than $2$). Thus the discriminant of \eqref{humdull} is negative for $s\geq 2$, contradicting that \eqref{humdull} is non-positive, which completes Case 2.
\end{proof}

\subsection*{Case 3:} $s(n-1)<m$.

\begin{proof}
In this case, the coefficient of $x$ in \eqref{failhyper} is negative, so we can  apply the estimate $x\leq s(s+1)(y/2-1)+s$ to yield
\be\label{humII}\Big(s(s+1)n^2-s(s+2m+1)n+2m^2-2m\Big)y-2(s^2+m)n^2+2
(s^2+2sm-m^2+2m+\frac{m}{s})n\leq 0.\ee

We next need to show that the coefficient of $y$ in \eqref{humII} is non-negative. To this end, assume by contradiction that \be\label{ycoeffII}2m^2-(2sn+2)m+s(s+1)n(n-1)=s(s+1)n^2-s(s+2m+1)n+2m^2-2m<0.\ee
Considering \eqref{ycoeffII} as a function of $m$, we find that it has positive derivative when $m\geq \frac{sn+1}{2}$. Thus, since $m>s(n-1)\geq \frac{sn+1}{2}$ by case hypothesis (in view of $n\geq 3$), we see that \eqref{ycoeff} is minimized when $m=s(n-1)$, yielding $$(n-1)(n-2)s(s+1)<0,$$ which fails in view of $s\geq 1$ and $n\geq 2$. So we instead conclude that the coefficient of $y$ in \eqref{humII} is non-negative.

As a result, applying the hypothesis $y\geq 3n-3$  in \eqref{humII}   yields \be\label{humduller}(4n-6)m^2-(6sn^2+2n^2-10sn+2n-6-\frac{2n}{s})m+sn(3sn^2+3n^2-8sn-6n+5s+3)\leq 0.\ee
The above expression is quadratic in $m$ with positive lead coefficient $4n-6>0$ (as $n\geq 2$) and discriminant  equal to $4$ times the quantity
\begin{align}\nn-n(n-2)(n-3)(3n-5)s^2-2n(n-2)(n-3)(3n-4)s+(n^4-4n^3+5n^2-6n+9)\\+
\frac{n^2+s(6n-2n^2-2n^3)}{s^2}\nn\\ <-n(n-2)(n-3)(3n-5)s^2-2n(n-2)(n-3)(3n-4)s+(n^4-4n^3+5n^2-6n+9)\label{aling}\end{align}
Since $n\geq 3$ is an integer, the derivative with respect to $s$ of \eqref{aling} is non-positive, meaning   \eqref{aling} is maximized for $s=1$, in which case it equals
$-8n^4+54n^3-114n^2+72n+9$, which is negative for $n\geq 4$ (it has two complex roots with largest real root less than $4$). Thus the discriminant of \eqref{humduller} is negative for $n\geq 4$, contradicting that \eqref{humduller} is non-positive. It remains only to consider the case when $n=3$.

For $n=3$, \eqref{humduller} becomes  (dividing all terms by $6$)
\be\label{humdullest}m^2-(4s+3-\frac{1}{s})m+s(4s+6)\leq 0.
\ee
By case hypothesis, $m\geq (n-1)s+1=2s+1$, while \eqref{humdullest} is minimized for $m=2s+1+\frac12-\frac{1}{2s}$. Thus, since $m$ is an integer, we see \eqref{humdullest} is minimized when $m=2s+1$, in which case \eqref{humdullest} yields the contradiction $1/s\leq 0$, which is a proof concluding contradiction.
\end{proof}
\end{proof}

The following proposition gives a rough estimate for the resulting bound from Lemma \ref{lemma-2D-redcalc}.

\begin{proposition}
\label{prop-rough-estimate} For real numbers $x,y,s> 0$ with $y>2$, we have
$$(\frac{x}{s}+\frac{y}{2}-1)(s+1)\geq x+\frac{y}{2}-1+2\sqrt{x(\frac{y}{2}-1)}.$$
\end{proposition}

\begin{proof}
We have $(\frac{x}{s}+\frac{y}{2}-1)(s+1)=x+\frac{y}{2}-1+\frac{x}{s}+s(\frac{y}{2}-1)$. Thus, if the proposition fails, then $0<\frac{2x}{s}+(y-2)s<\sqrt{8x(y-2)}$. Multiplying by $s$ and squaring both sides, we obtain $4x^2+(y-2)^2s^4+4s^2x(y-2)<8s^2x(y-2)$, implying $$0>4x^2+(y-2)^2s^4-4s^2x(y-2)=(2x-(y-2)s^2)^2,$$
which is not possible.
\end{proof}

We now proceed with the proof of our main result.

\begin{proof}[Proof of Theorem \ref{thm-3k-4-minimprove}]  
We may  w.l.o.g. assume $0=\min A=\min B$ and $\gcd(A+B)=1$. In view of \eqref{hyp1a}, we have
\be\nn |A+B|<|A|+\frac{|A|}{s}+\frac{s+1}{2}|B|-s-1.\ee



\medskip

Let us begin by showing it suffices to prove the theorem in the case $\gcd^*(B)=1$, that is, when $B-B$ generates $\la A+B\ra_*=\Z$. To this end, assume we know the theorem is true when $\gcd^*(B)=1$ but $\gcd^*(B)=d\geq 2$.
We can partition $A=A_1\cup A_2\cup\ldots\cup A_t$ with each $A_i$ a maximal nonempty subset of elements congruent to each other modulo $d$. For $i\in [1,t]$, let $s_i\geq 1$ be the integer with $$(s_i-1)s_i(|B|/2-1)+s_i-1<|A_i|\leq s_i(s_i+1)(|B|/2-1)+s_i.$$ Note that $\gcd^*(A_i+B)=d=\gcd^*(B)$ for every $i\in [1,t]$. Thus, if $|A_i+B|<(\frac{|A_i|}{s_i}+\frac{|B|}{2}-1)(s_i+1)$ for some $i\in [1,t]$, then we could apply the case $\gcd^*(B)=1$ to the sumset $A_i+B$ (since $B-B$ generates $d\Z=\la A_i+B\ra_*$) thereby obtaining the desired conclusion for $B$. Therefore, we can instead assume this fails, meaning
\be|A_i+B|\geq(\frac{|A_i|}{s_i}+\frac{|B|}{2}-1)(s_i+1)=|A_i|+\frac{|A_i|}{s_i}+\frac{s_i+1}{2}|B|
-s_i-1\quad\mbox{ for every $i\in [1,t]$}.\label{weetee}\ee
Since the sets $A_i$ are distinct modulo $d$ with $B\subseteq d\Z$, it follows that the sets $A_i+B$ are disjoint for $i\in [1,t]$. Thus
\be\label{weevee}|A+B|\geq \Sum{i=1}{t}|A_i+B|\geq \Sum{i=1}{t}\left(|A_i|+\frac{|A_i|}{s_i}+\frac{s_i+1}{2}|B|-s_i-1\right),\ee with the latter inequality in view of \eqref{weetee}.
Let $m=s_1+\ldots+s_t$. Note $|A_1|+\ldots+|A_t|=|A|$ and  $1\leq s_i\leq |A_i|$ for all $i\in [1,t]$ (in view of the definition of $s_i$). Thus $1\leq t\leq m\leq |A|$.  A simple inductive argument on $t$ (with base case $t=2$) shows that $\Sum{i=1}{t}\frac{x_i}{y_i}\geq \left(\Sum{i=1}{t}x_i\right)/\left(\Sum{i=1}{t}y_i\right)$ holds for any positive real numbers $x_1,y_1,\ldots,x_t,y_t>0$. In particular, $\Sum{i=1}{t}\frac{|A_i|}{s_i}\geq \left(\Sum{i=1}{t}|A_i|\right)/\left(\Sum{i=1}{t}s_i\right)=\frac{|A|}{m}$.
Applying this estimate in \eqref{weevee}, along with the identities $|A_1|+\ldots+|A_t|=|A|$ and $m=s_1+\ldots+s_t$, yields \begin{align}|A+B|\geq \nn |A|+\frac{|A|}{m}+\frac{m}{2}|B|-m+t(|B|/2-1)&\geq |A|+\frac{|A|}{m}+\frac{m}{2}|B|-m+|B|/2-1\\&=(\frac{|A|}{m}+\frac{|B|}{2}-1)(m+1)\label{kaydey}.
\end{align}
Since $1\leq m\leq |A|$, \ $|B|\geq 3$ and $2\leq \frac{|B|}{3}+1$, Lemma \ref{lemma-2D-redcalc} (applied with $x=|A|$, $y=|B|$, and $n=2$) implies $\lceil(\frac{|A|}{m}+\frac{|B|}{2}-1)(m+1)\rceil\geq (\frac{|A|}{s}+\frac{|B|}{2}-1)(s+1)$. As a result, since $|A+B|$ is an integer, we see that \eqref{kaydey} yields the lower bound $|A+B|\geq (\frac{|A|}{s}+\frac{|B|}{2}-1)(s+1)$, contrary to hypothesis. So it remains to prove the theorem when $\gcd^*(B)=1$, which we now assume.

\medskip

We proceed by induction on $|A|$. Note, if $|A|=1$, then  $s=1$ and the bound $|A+B|\geq |B|=(\frac{|A|}{s}+\frac{|B|}{2}-1)(s+1)$ holds trivially. This completes the base of the induction and allows us to assume $|A|\geq 2$.

\medskip

Suppose $\gcd^*(A)=d>1$.  Then $A$ is contained in a $d\Z$-coset. In view of $\gcd^*(B)=1$ and $d\geq 2$, it follows that there are $t\geq 2$ $d\Z$-coset representatives $\beta_1,\ldots,\beta_t\in \Z$ such that each slice $B_{\beta_i}=(\beta_i+\Z)\cap B$ is nonempty for $i\in [1,t]$. Applying Theorem \ref{CDT-for-Z} to $A+B_{\beta_i}$ for each $i\in [1,t]$ yields $|A+B|\geq \Sum{i=1}{t}(|A|+|B_{\beta_i}|-1)=t(|A|-1)+|B|\geq 2|A|+|B|-2\geq(\frac{|A|}{s}+\frac{|B|}{2}-1)(s+1)$, with the final inequality in view of Lemma \ref{lemma-2D-redcalc} (applied with $x=|A|$, $y=|B|$, $m=1$ and $n=2$), contrary to hypothesis. So we instead conclude that $${\gcd}^*(A)={\gcd}^*(B)=1.$$

\medskip

By translation, we may assume $B\subseteq [0,n]$ and $A\subseteq [0,m]$ with $0,\,n\in B$ and $0,\,m\in A$. Define $P_B:=[0,n]$. Let $\phi_n:\Z\rightarrow \Z/n\Z$ be the reduction modulo $n$ homomorphism and set $G=\Z/n\Z$.
We aim to use modular reduction as described above Corollary \ref{cor-modred}. To that end, let $\wtilde A$ and $\wtilde B$, as well as all associated notation,  be defined as above Corollary \ref{cor-modred} using the modulus $n=\max B-\min B$.
In particular, $A_t\subseteq \ldots\subseteq A_0=\phi_n(A)\subseteq \Z/n\Z$, where $t\geq 0$ is the maximal index such that  $A_t\neq \emptyset$,
$B_1=\{0\}$,  $B_0=\phi_n(B)\subseteq \Z/n\Z$,  $|B_0|=|B|-1$, \ $\Sum{i=0}{t}|A_i|=|A|$, and  $$n=|P_B|-1.$$

\subsection*{Case 1:} $A_0+B_0=\Z/n\Z$.

\begin{proof}
In this case, Corollary \ref{cor-modred} implies that $|A|+|B|+r=|A+B|\geq |A_0+B_0|+|A|=n+|A|$, implying $|P_B|=n+1\leq |B|+1+r$, as desired.
\end{proof}


\subsection*{Case 2:} $|A_0+B_0|<\min\{n,\,|A_0|+|B_0|-1\}$.

\begin{proof}
Let $H=\mathsf H(A_0+B_0)\leq G$. In view of the case hypothesis,  Kneser's Theorem (Theorem \ref{thm-kt}) implies that $H$ is a \emph{proper}, nontrivial subgroup of $G=\Z/n\Z$ with $|A_0+B_0|\geq |H+A_0|+|H+B_0|-|H|$ and \be\label{kst-hyp}|\phi_H(A_0)+\phi_H(B_0)|=|\phi_H(A_0)|+|\phi_H(B_0)|-1<|G/H|.\ee Note $\phi_H(A_0)+\phi_H(B_0)$ is aperiodic as $H=\mathsf H(A_0+B_0)$ is the maximal period of $A_0+B_0$, and \be\label{kt-holes}|(H+A_0)\setminus A_0|+|(H+B_0)\setminus B_0|\leq |H|-2,\ee else $|A_0+B_0|\geq |A_0|+|B_0|-1$ (in view of the bound from Kneser's Theorem), contrary to case hypothesis.
In view of \eqref{kst-hyp} and $G/H$ being nontrivial (as $H<G$ is proper), we can apply the Kemperman Structure Theorem (Theorem \ref{KST-}) to $\phi_H(A_0)+\phi_H(B_0)$. Then there exists a proper subgroup $L<G$ with $H\leq L$ such that $(\phi_L(A_0),\phi_L(B_0))$ is an elementary pair of some type (I)--(IV). Indeed, if type (IV) occurs, then $L=H$. Moreover, for types (I)--(III), there exist nonempty $L$-coset slices $A_\emptyset\subseteq A_0$ and $B_\emptyset\subseteq B_0$ inducing $L$-quasi-periodic decompositions in $H+A$ and $H+B$, so  $H+(A_0\setminus A_\emptyset)$ and $H+(B_0\setminus B_\emptyset)$ are both $L$-periodic, $\phi_H(A_\emptyset)+\phi_H(B_\emptyset)\in \phi_H(A)+\phi_H(B)$ is a unique expression element, and $$|A_\emptyset+B_\emptyset|=|H+A_\emptyset|+|H+B_\emptyset|-|H|.$$

\subsection*{Subcase 2.1:} $(\phi_L(A_0),\phi_L(B_0))$ has type (I).

In this case, either $|\phi_L(A_0)|=1$ or $|\phi_L(B_0)|=1$, both contradicting that $\gcd^*(A)=\gcd^*(B)=1$ in view of $L<G=\Z/n\Z$ being a \emph{proper} subgroup.

 \subsection*{Subcase 2.2:}  $(\phi_L(A_0),\phi_L(B_0))$ has type (IV).

In this case, $H=L$, $|\phi_H(A_0)|,\,|\phi_H(B_0)|\geq 3$, every element in $\phi_H(A_0)+\phi_H(B_0)$ has at least $2$ representations, and $$|A_0+B_0|=|G|-|H|.$$ Since $|\phi_H(A_0)+\phi_H(B_0)|=|\phi_H(A_0)|+|\phi_H(B_0)|-1\geq |\phi_H(A_0)|+2$, it follows that there are two distinct $H$-cosets $\gamma_1+H$ and $\gamma_2+H$ which intersect $A_0+B_0$ but not $A_0$. For each $\gamma_i$, we can find $\alpha_i\in A_0$ and $\beta_i\in B_0$ such that $\gamma_i+H=\alpha_i+\beta_i+H$, and we choose the pair $(\alpha_i,\beta_i)$ to maximize $|A_0\cap (\alpha_i+H)|+|B_0\cap (\beta_i+H)|$. Since every element in  $\phi_H(A_0)+\phi_H(B_0)$ has at least $2$ representations, it follows from the pigeonhole principle and \eqref{kt-holes} that $$|A_0\cap (\alpha_i+H)|+|B_0\cap (\beta_i+H)|\geq 2|H|-\frac12(|H|-2)=\frac32|H|+1\quad \mbox{ for $i=1,2$}.$$
Since each $\gamma_i+H$ does not intersect $A_0=A_0+B_1$, it follows from Corollary \ref{cor-modred} that \begin{align*}|A|+|B|+r=|A+B|&\geq |A_0+B_0|+|A|+2(\frac32|H|+1-1-|H|)\\ &=|A_0+B_0|+|A|+|H|=|G|+|A|=n+|A|,\end{align*} implying $|P_B|=n+1\leq |B|+r+1$, as desired.

\subsection*{Subcase 2.3:}  $(\phi_L(A_0),\phi_L(B_0))$ has type (III).

In this case, $|\phi_L(A_0)|,\,|\phi_L(B_0)|\geq 3$ and $$|A_0+B_0|=|(A_0+B_0)\setminus (A_\emptyset+B_\emptyset)|+|A_\emptyset+B_\emptyset|=(|G|-|L|)+
(|H+A_\emptyset|+|H+B_\emptyset|-|H|).$$ Moreover, by Lemma \ref{thm-typeIII}, we have \ \be\label{steefel}\phi_L(A_0\setminus A_\emptyset)+\phi_L(B_0\setminus B_\emptyset)=\phi_L(A_0+B_0)\setminus \phi_L(A_\emptyset+B_\emptyset).\ee Since $|\phi_L(A_0)+\phi_L(B_0)|=|\phi_L(A_0)|+|\phi_L(B_0)|-1\geq |\phi_L(A_0)|+2$, it follows that there is some $L$ coset  $\gamma+L$  that intersects $A_0+B_0$ but not $A_0$ and which is distinct from the $L$-coset $A_\emptyset+B_\emptyset+L$. Then \eqref{steefel} ensures there are $\alpha\in A_0\setminus A_\emptyset$ and $\beta\in B_0\setminus B_\emptyset$ with $\alpha+\beta+L=\gamma+L$. As a result, since  $H+(A_0\setminus A_\emptyset)$ and $H+(B_0\setminus B_\emptyset)$ are both $L$-periodic, it follows that  $$|A_0\cap (\alpha+L)|+|B\cap (\beta+L)|\geq 2|L|-(|(H+A_0)\setminus A_0|+|(H+B_0)\setminus B_0)|)\geq 2|L|-|H|+2,$$ with the final inequality in view of \eqref{kt-holes}.
Since $\gamma+L$ does not intersect $A_0$, it follows from Corollary \ref{cor-modred} that \begin{align*}|A|+|B|+r=|A+B|&\geq |A_0+B_0|+|A|+(2|L|-|H|+2-|L|-1)\\ &=|A_0+B_0|+|A|+|L|-|H|+1\\&=(|G|-|L|+|H+A_\emptyset|+|H+B_\emptyset|-|H|)+|A|+|L|-|H|+1\\&\geq |G|+|A|+1=n+1+|A|,\end{align*} implying $|P_B|=n+1< |B|+r+1$, as desired.

\subsection*{Subcase 2.4:}  $(\phi_L(A_0),\phi_L(B_0))$ has type (II).

In this case, Lemma \ref{Lemma-ap-mod-reduction} implies that $A+B$ is Freiman isomorphic to a sumset $A'+B'\subseteq \Z^2$ with $B'$ contained in exactly $n'=|\phi_L(B_0)|\geq 2$ lines parallel to the horizontal axis, and $A'$ contained in exactly $m'=|\phi_L(A_0)|\geq 2$ lines parallel to the horizontal axis. Let $x=(1,0)$ and $y=(0,1)$. Compressing along the horizontal axis results in a sumset $A''+B''\subseteq \Z^2$, where $A''=\mathsf C_{x,y}(A')$ and  $B''=\mathsf C_{x,y}(B')$. Then $|A+B|=|A'+B'|\geq |A''+B''|$,  $|A''|=|A'|=|A|$ and $|B''|=|B'|=|B|$.
Since $H+(A_0\setminus A_\emptyset)$ and $H+(B_0\setminus B_\emptyset)$ are both $L$-periodic with $A_\emptyset\subseteq A_0$ and $B_\emptyset\subseteq B_0$ each $L$-coset slices, it follows from \eqref{kt-holes} that \begin{align*}|(L+B_0)\setminus B_0|&=|(L+B_0)\setminus (H+B_0)|+|(H+B_0)\setminus B_0|\\&= (|L|-|H+B_\emptyset|)+|(H+B_0)\setminus B_0|\leq |L|-|H|+|H|-2=|L|-2.\end{align*} Thus  $$|B|=|B_0|+1\geq n'|L|-|L|+3.$$ As a result, if $|L|\geq 3$, then $|B''|=|B|\geq 3n'$, in which case Theorem \ref{2D-brunn-Mink} (applied with $\ell=\R x$) and Lemma \ref{lemma-2D-redcalc} (applied with  $m=m'$, $n=n'$, $x=|A|=|A''|$ and $y=|B|=|B''|$) imply $|A+B|\geq |A''+B''|\geq (\frac{|A|}{s}+\frac{|B|}{2}-1)(s+1)$, contrary to hypothesis. Likewise, if $|L|=2$ and $n'=2$, then $|B''|=|B|\geq 2|L|-|L|+3=5\geq 3n'-3$, whence Theorem \ref{2D-brunn-Mink} (applied with $\ell=\R x$)  and Lemma \ref{lemma-2D-redcalc} (applied with  $m=m'$, $n=2$, $x=|A|=|A''|$ and $y=|B|=|B''|$) again yield the contradiction  $|A+B|\geq |A''+B''|\geq (\frac{|A|}{s}+\frac{|B|}{2}-1)(s+1)$.
We are left to consider the case when $|L|=2$ and $n'\geq 3$, in which case $|B''|=|B|\geq n'|L|-|L|+3=2n'+1\geq 7$. Each horizontal line that intersects $B''$ contains at most $|L|+1\leq 3$ elements (as $B=B_0\cup B_1$ with $|B_1|=1$ and the elements of $B_0$ distinct modulo $n$), ensuring via the definition of compression that $B''$ is contained in $n''\leq 3$ vertical lines. Note  $|B|\geq n'|L|-|L|+3=2n'+1>n'$ ensures some horizontal line has at least two elements, whence $n''\geq 2$.  Thus Theorem \ref{2D-brunn-Mink} (applied with $\ell=\R y$) and Lemma \ref{lemma-2D-redcalc} (applied with  $n=n''\in [2,3]$, $x=|A|=|A''|$ and $y=|B|=|B''|$,  noting that $|B''|=|B|\geq 7$ ensures $3n'-3\leq 6<7\leq |B|$) again yields the contradiction $|A+B|\geq |A''+B''|\geq (\frac{|A|}{s}+\frac{|B|}{2}-1)(s+1)$, completing Case 2.
\end{proof}

\subsection*{Case 3:} $|A_0+B_0|\geq |A_0|+|B_0|-1$.

\begin{proof}
Decompose $A=\bigcup_{i=1}^{|A_0|}X_i$, \ $B=\bigcup_{j=1}^{|B_0|}Y_j$ and $A+B=\bigcup_{i=1}^{|A_0|}\bigcup_{j=1}^{|B_0|}(X_i+Y_j)=\bigcup_{k=1}^{|A_0+B_0|}Z_k$ modulo $n$, where the $X_i\subseteq A$ are the maximal nonempty subsets of elements congruent modulo $n$, and likewise for the $Y_j\subseteq B$ and $Z_k\subseteq A+B$. For $i\in [1,|A_0|]$, let $X'_i$ be obtained from $X_i$ by removing the smallest element from $X_i$. Set $A'=\bigcup_{i=1}^{|A_0|}X'_i$ and decompose $A'+B=\bigcup_{k=1}^{|A_0+B_0|}Z'_k$ with the $Z'_k\subseteq Z_k$ (possibly empty). Each $X'_i+Y_j\subseteq X_i+Y_j$ is missing the smallest element of $X_i+Y_j$, as this was a unique expression element in $X_i+Y_j$. As a result, since each $Z_k$ is a union of sets of the form $X_i+Y_j$, it follow that each $Z'_k\subseteq Z_k$ is missing the smallest element of $Z_k$. In consequence,
\be \label{tangent2}|A'|=|A|-|A_0|\quad\und\quad |A'+B|\leq |A+B|-|A_0+B_0|\leq |A+B|-|A_0|-|B|+2,\ee
with the final inequality above in view of $|B_0|=|B|-1$ and the case hypothesis.

\medskip

If $|A|=|A_0|$, then Theorem \ref{modular-red-cor} and the case hypothesis imply that $|A+B|\geq |\wtilde A+\wtilde B|=|A_0+B_0|+|A_0+B_1|=|A_0+B_0|+|A_0|\geq 2|A_0|+|B_0|-1=2|A|+|B|-2$, while $2|A|+|B|-2\geq(\frac{|A|}{s}+\frac{|B|}{2}-1)(s+1)$ follows by Lemma \ref{lemma-2D-redcalc} (applied with $x=|A|$, $y=|B|$, $m=1$ and $n=2$), yielding $|A+B|\geq (\frac{|A|}{s}+\frac{|B|}{2}-1)(s+1)$, which is contrary to hypothesis. Therefore we instead conclude that $|A_0|<|A|$, ensuring that $A'$ is nonempty.

\medskip

Let $s'\geq 1$ be the integer such that \be\label{wiggleworm}(s'-1)s'\left(\frac{|B|}{2}-1\right)+s'-1<|A'|\leq s'(s'+1)\left(\frac{|B|}{2}-1\right)+s'.\ee
Note, since $|A'|<|A|$, that $s'\leq s$.
If $|A'+B|<(\frac{|A'|}{s}+\frac{|B|}{2}-1)(s+1)$, then applying the induction hypothesis to $A'+B$ yields the desired conclusion for $B$. Therefore we can assume
$$|A'+B|\geq (\frac{|A'|}{s'}+\frac{|B|}{2}-1)(s'+1).$$ Combined with \eqref{tangent2}, we find
\begin{align} |A+B|&\geq (\frac{|A|-|A_0|}{s'}+\frac{|B|}{2}-1)(s'+1)+|A_0|+|B|-2\nn\\
&=|A|+\frac{|A|}{s'}+\frac{s'+3}{2}|B|-s'-3-\frac{|A_0|}{s'}.\label{gocu}\end{align}

Now Corollary \ref{cor-modred} and the case hypothesis imply $|A+B|\geq|A_0+B_0|+|A|\geq |A_0|+|B_0|-1+|A|=|A|+|B|-2+|A_0|$. Combined with the hypothesis $|A+B|<(\frac{|A|}{s}+\frac{|B|}{2}-1)(s+1)$, we conclude that \be\label{A0-small} |A_0|<\frac{|A|}{s}+(s-1)(\frac{|B|}{2}-1).\ee

\medskip

\subsection*{Subcase 3.1} $1\leq s'\leq s-2$.

In this case,  $s\geq 3$ and \eqref{wiggleworm} gives  $|A|-|A_0|=|A'|\leq (s-2)(s-1)(|B|/2-1)+s-2$, which combined with \eqref{A0-small} yields $\frac{s-1}{s}|A|-(s-1)(\frac{|B|}{2}-1)<(s-2)(s-1)(\frac{|B|}{2}-1)+s-2$, in turn implying $$|A|<s(s-1)(\frac{|B|}{2}-1)+\frac{s(s-2)}{s-1}<s(s-1)(\frac{|B|}{2}-1)+s.$$ However, this  contradicts the hypothesis $|A|\geq (s-1)s(\frac{|B|}{2}-1)+s$.

\subsection*{Subcase 3.2:} $s'=s$.

In this case, the bounds defining $s$ and $s'$ ensure $$|A_0|=|A|-|A'|\leq \Big(s(s+1)(|B|/2-1)+s\Big)-\Big(s(s-1)(|B|/2-1)+s\Big)=s(|B|-2).$$ Thus \eqref{gocu} implies
\begin{align*}|A+B|&\geq |A|+\frac{|A|}{s}+\frac{s+1}{2}|B|-s-1+|B|-2-\frac{|A_0|}{s}
\\ &\geq |A|+\frac{|A|}{s}+\frac{s+1}{2}|B|-s-1=(\frac{|A|}{s}+\frac{|B|}{2}-1)(s+1),
\end{align*} which is contrary to hypothesis.

\subsection*{Subcase 3.2:} $1\leq s'=s-1$.

In this case, $s\geq 2$, while \eqref{gocu} and \eqref{A0-small} yield
$$|A+B|>|A|+\frac{|A|}{s-1}+\frac{s+2}{2}|B|-s-2-\frac{|A|}{s(s-1)}-(\frac{|B|}{2}-1).$$
Combined with the hypothesis $|A+B|<(\frac{|A|}{s}+\frac{|B|}{2}-1)(s+1)=|A|+\frac{|A|}{s}+\frac{s+1}{2}|B|-s-1$, we conclude that
$$\frac{|A|}{s}=\frac{|A|}{s-1}-\frac{|A|}{s(s-1)}<\frac{|A|}{s},$$
which is not possible.
\end{proof}

As the above cases exhaust all possibilities, the proof is complete.
\end{proof}

\begin{proof}[Proof of Corollary \ref{cor-3k-4-minimprove}] For $|B|\leq 2$, we have $B=P_B$ being itself an arithmetic progression, with $|P_B|=|B|\leq |B|+r+1$ in view of Theorem \ref{CDT-for-Z}. For $|B|\geq 3$,
the result is an immediate consequence of Theorem \ref{thm-3k-4-minimprove} and Proposition \ref{prop-rough-estimate} (applied with $x=|A|$,\  $y=|B|$ and $s$ as defined in the statement of Theorem \ref{thm-3k-4-minimprove}).
\end{proof}

\section{Concluding Remarks}
As mentioned in the introduction, the bound $|P_B|\leq |B|+r+1$ is tight in Theorem \ref{thm-3k-4-minimprove}. However, the examples showing this bound to be tight  (including variations of that given in the introduction) require \emph{both} $A$ and $B$ to be contained in short arithmetic progressions. Thus a strengthening of Theorem \ref{thm-3k-4-minimprove}, where the bound on $|P_B|$ is improved when $A$ is not contained in a short arithmetic progression, is expected. Indeed, it might be hoped that $|P_A|$ could be reasonably bounded so long as there is no partition $A=A_0\cup A_1$ of $A$ into nonempty subsets with $A_0+B$ and $A_1+B$ disjoint.

\end{document}